\newcommand{\CC}{\mathbf C}
\newcommand{\RR}{\mathbf R}
\newcommand{\ZZ}{\mathbf Z}
\newcommand{\NN}{\mathbf N}
\newcommand{\QQ}{\mathbf Q}
\newcommand{\sgn}{\operatorname{sgn}}
\newcommand{\vol}{\operatorname{vol}}
\newcommand{\ch}{\operatorname{ch}}
\newtheorem{thm}{Theorem}[section]
\newtheorem{prop}[thm]{Proposition}
\newtheorem{cor}[thm]{Corollary}
\newtheorem{lemma}[thm]{Lemma}
\theoremstyle{definition}
\newtheorem*{defn}{Definition}
\title{Matching polytopes and Specht modules}
\author{Ricky Ini Liu\\Massachusetts Institute of Technology\\Cambridge, Massachusetts\\\texttt{riliu@math.mit.edu}}
\begin{document}
\maketitle

\begin{abstract}
We prove that the dimension of the Specht module of a forest $G$ is the same as the normalized volume of the matching polytope of $G$. We also associate to $G$ a symmetric function $s_G$ (analogous to the Schur symmetric function $s_\lambda$ for a partition $\lambda$) and investigate its combinatorial and representation-theoretic properties in relation to the Specht module and Schur module of $G$. We then use this to define notions of standard and semistandard tableaux for forests.
\end{abstract}

\section{Introduction}

The \emph{(fractional) matching polytope} of a graph $G$ is the space of all nonnegative edge weightings of $G$ such that the sum of the weights around any vertex is at most 1. This polytope, along with the related \emph{perfect matching polytope}, have been well studied with respect to combinatorial optimization (\cite{KorteVygen}, \cite{LovaszPlummer}).

In this paper, we will first address a natural question, namely to compute the volume of the matching polytope of a forest. We will present several simple recurrences for this volume in Section 2.

We then consider a seemingly unrelated question in the representation theory of the symmetric group. The \emph{Specht module} construction takes the Young diagram of a partition $\lambda$ of $n$ and produces an irreducible representation $S^\lambda$ of the symmetric group $\Sigma_n$. However, this construction can be applied to diagrams of boxes other than Young diagrams of partitions, though it will generally not yield an irreducible representation. For instance, given a skew Young diagram $\lambda/\mu$, one can apply the same construction to get a skew Specht module. Moreover, when one writes the decomposition of this skew Specht module in terms of irreducible representations, the coefficients that appear are the famous Littlewood-Richardson coefficients \cite{JamesPeel}. The general question of describing the Specht module obtained for an arbitrary starting diagram has been the subject of much study (\cite{James}, \cite{ReinerShimozono2}, \cite{ReinerShimozono}, \cite{Magyar}).

Our main result is to show that the two questions described above are in fact related. First we note that the Specht module construction naturally applies to bipartite graphs. We then prove our main result: for a forest $G$, the dimension of the Specht module $S^G$ is exactly the normalized volume $V(G)$ of the matching polytope of $G$.

This result allows us to draw a combinatorial relationship between matchings and Specht modules. In particular, we will ask how to compute the restriction of the Specht module of a forest with $n$ edges from $\Sigma_n$ to $\Sigma_{n-1}$. In the case of partitions, this amounts to a ``branching rule,'' in which one only needs to identify corner boxes of a Young diagram. We will show that for forests there is a similar rule, in which the role of corner boxes is replaced by that of edges in a so-called \emph{almost perfect matching}.

We will also define a symmetric function $s_G$ analogous to the Schur symmetric function $s_\lambda$ for a partition $\lambda$ and describe its combinatorial and representation-theoretic significance. In particular, we will show that it somehow universally describes the recurrence satisfied by $\dim S^G=V(G)$. 

Finally, we will discuss the Schur module construction, which is an analog of the Specht module construction to $GL(N)$-modules. We will then discuss its relation to the previous objects of study. This will allow us to define analogs of standard and semistandard tableaux for forests.

In Section 2, we will discuss the basic properties and recurrences for matching polytopes that we will need for the remainder of the paper. In Section 3, we will introduce Specht modules and prove our main result, as well as deduce a ``branching rule'' for Specht modules of forests. In Section 4, we will define a Schur symmetric function for forests and describe its universality. In Section 5, we will discuss Schur modules and define a notion of semistandard tableaux for forests. Finally, in Section 6 we will formulate some remaining questions and concluding remarks.

\section{Matching polytopes}

In this section, we will describe a recurrence for the normalized volume of the matching polytope of a forest. We will then use this recurrence to provide a characterization for this volume. We will also provide a second, more efficient recurrence for calculating the volume.

We begin with some key definitions.

\begin{defn} Let $G=(V,E)$ be a graph. The \emph{matching polytope} $M_G$ of $G$ is the space of all nonnegative edge weightings $w\colon E \to \RR_{\geq 0}$ such that for all $v \in V$,
\[\sum_{e \ni v} w(e) \leq 1.\]
\end{defn}

Note that $M_G$ is a rational convex polytope of full dimension in $\RR^n$, where $n=|E|$ is the number of edges in $G$. 
The reason that $M_G$ is called a matching polytope is due to the following definition.

\begin{defn} A \emph{matching} $M$ of a graph $G$ is a collection of edges of $G$ such that no two edges of $M$ share a vertex. \end{defn}

Given a matching $M$ of $G$, let us write $\chi_M$ for the edge weighting of $G$ defined by $\chi_M(e)=1$ if $e \in M$ and $0$ otherwise. Then clearly $\chi_M \in M_G$ for all matchings $M$.

The following well known proposition characterizes when $M_G$ is a lattice polytope. (See, for instance, \cite{Edmonds}.)

\begin{prop} The matching polytope $M_G$ is a lattice polytope if and only if $G$ is bipartite. In this case, $M_G$ is the convex hull of $\chi_M$, where $M$ ranges over all matchings of $G$. \end{prop}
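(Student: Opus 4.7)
The plan is to split the biconditional into two implications and handle them by rather different means.

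For the forward direction (bipartite implies lattice), I would invoke the classical fact that the vertex-edge incidence matrix $A$ of a bipartite graph is totally unimodular. Since $M_G = \{w \geq 0 : Aw \leq \mathbf{1}\}$ is defined by the system $Aw \leq \mathbf{1}$, $w \geq 0$ with integral right-hand side, total unimodularity forces every vertex of $M_G$ to have integer coordinates. Any such vertex $w$ is then a $0/1$-vector (nonnegativity plus the fact that each $w(e)$ appears in some constraint $\sum_{e \ni v} w(e) \leq 1$), and two edges of its support cannot be incident at the same vertex, so the support is a matching $M$ and $w = \chi_M$. Conversely each $\chi_M$ clearly lies in $M_G$, so $M_G$ is the convex hull of the $\chi_M$'s and in particular a lattice polytope.

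For the reverse direction (not bipartite implies not lattice), I would exhibit a non-integer vertex. Suppose $G$ contains an odd cycle $C$ on vertices $v_0, \ldots, v_{2k}$ with edges $e_i = v_i v_{i+1}$ (indices mod $2k+1$). Let $w_C$ assign $1/2$ to each edge of $C$ and $0$ elsewhere; the sum at every $v_i \in C$ is $1$ and the sum at every other vertex is $0$, so $w_C \in M_G$. To verify $w_C$ is a vertex, I would collect the active constraints: $w(e) = 0$ for $e \notin C$ together with $\sum_{e \ni v_i} w(e) = 1$ for each $v_i \in C$. The latter reduces to the cyclic linear system $x_i + x_{i-1} = 1$ on the edges of $C$, which admits the unique solution $x_i = 1/2$ precisely because $|C| = 2k+1$ is odd. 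Thus the tight constraints uniquely determine $w_C$, so $w_C$ is a vertex of $M_G$ whose coordinates are not all integers.

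The main point of care is the verification that $w_C$ actually is a vertex rather than an interior point of some face: for an even cycle the analogous cyclic system $x_i + x_{i-1} = 1$ has a one-parameter family of solutions, so $w_C$ would not be extreme. The oddness of $C$ is exactly what makes this system nonsingular, and this is the essential algebraic input that distinguishes the two cases.
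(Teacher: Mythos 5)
Your proof is correct. The paper does not actually prove this proposition---it records it as a well-known fact with a citation to Edmonds---so there is no internal argument to compare against; what you have written is the standard proof from the literature (total unimodularity of the bipartite incidence matrix for one direction, and the half-integral vertex supported on an odd cycle for the other), and both halves are carried out soundly, including the key observation that the cyclic system $x_i + x_{i-1} = 1$ is nonsingular precisely when the cycle is odd.
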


It follows that when $G$ is bipartite, the volume of $M_G$ is an integer multiple of $\frac{1}{n!}$. Therefore, we let
\[V(G)=n! \cdot \vol(M_G) \in \ZZ.\]

When considering bipartite graphs, it will be important later that we distinguish the two parts of our graph. Therefore, we will henceforth assume that all bipartite graphs are equipped with a bipartition of the vertices, that is, each vertex will be colored either black or white such that no two adjacent vertices have the same color. We will also assume that our graphs contain no isolated vertices. (Removing any isolated vertices will not change the matching polytope of the graph.)

In particular, let $\mathcal F$ be the set of all finite bipartitioned forests. We will now present the recurrences that will allow us to compute $V(G)$ for any $G \in \mathcal F$.

First, we require the following base case.

\begin{prop}
Let $T_n$ be the star with $n$ edges and white center vertex. Then $V(T_n)=1$.
\end{prop}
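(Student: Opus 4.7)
The plan is to compute the matching polytope $M_{T_n}$ explicitly and show it is the standard $n$-simplex. Label the edges of $T_n$ as $e_1, \ldots, e_n$, each incident to the white center and to a distinct black leaf. An edge weighting is then just a tuple $(w_1, \ldots, w_n) \in \RR_{\geq 0}^n$.

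Next I would write down the vertex constraints defining $M_{T_n}$. At the white center vertex the sum over all incident edges gives the single constraint $w_1 + \cdots + w_n \leq 1$, while at each black leaf the only incident edge is $e_i$, yielding $w_i \leq 1$. The leaf constraints are implied by the center constraint together with nonnegativity, so $M_{T_n}$ is exactly the standard simplex
\[\left\{(w_1,\ldots,w_n) \in \RR_{\geq 0}^n : \sum_{i=1}^n w_i \leq 1\right\}.\]

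It is a standard fact that this simplex has Euclidean volume $\tfrac{1}{n!}$, so $V(T_n) = n! \cdot \vol(M_{T_n}) = 1$. The argument is essentially just unpacking definitions, so there is no real obstacle; the only thing to be careful about is confirming that the leaf inequalities are redundant (which they are, since each $w_i \geq 0$ satisfying $\sum w_j \leq 1$ automatically satisfies $w_i \leq 1$).
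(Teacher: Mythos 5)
Your proof is correct and follows essentially the same route as the paper: identify $M_{T_n}$ as the elementary $n$-simplex cut out by $w_i \geq 0$ and $\sum w_i \leq 1$, and use its volume $\frac{1}{n!}$. The extra remark that the leaf constraints are redundant is a fine (if unnecessary) bit of care.
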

\begin{proof}
If the edges have weights $w_1, \dots, w_n$, then $M_G$ is defined by $0 \leq w_i \leq 1$ for all $i$ and $\sum_{i=1}^n w_i \leq 1$. This is just the elementary $n$-simplex in $\RR^n$, which has volume $\frac{1}{n!}$.
\end{proof}

Next, we use the following recurrence to reduce to connected graphs.

\begin{prop}
Let $G$ be a disjoint union of graphs $G_1+G_2$, where $G_1$ has $m$ edges and $G_2$ has $n-m$ edges. Then $V(G)=\binom{n}{m}V(G_1)V(G_2)$.
\end{prop}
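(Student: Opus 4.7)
The plan is to reduce the claim to the fact that volume is multiplicative under Cartesian products of polytopes. The key observation is that since $G_1$ and $G_2$ share no vertices, the constraint on an edge weighting $w\colon E(G) \to \RR_{\geq 0}$ coming from a vertex $v \in V(G_i)$ involves only the weights of edges in $G_i$. Hence the constraints decouple completely, and I would first show that under the natural identification $\RR^{E(G)} \cong \RR^{E(G_1)} \times \RR^{E(G_2)}$ we have $M_G = M_{G_1} \times M_{G_2}$.

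Given this, I would invoke the standard fact that $\vol(M_{G_1} \times M_{G_2}) = \vol(M_{G_1}) \cdot \vol(M_{G_2})$, so that
\[
\vol(M_G) = \vol(M_{G_1}) \cdot \vol(M_{G_2}).
\]
Multiplying through by $n!$ and using the definition $V(G_i) = (\text{number of edges of } G_i)! \cdot \vol(M_{G_i})$, this becomes
\[
V(G) = n! \cdot \frac{V(G_1)}{m!} \cdot \frac{V(G_2)}{(n-m)!} = \binom{n}{m} V(G_1) V(G_2),
\]
which is the desired recurrence.

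There is essentially no obstacle here: the only thing to check carefully is the product decomposition of the matching polytope, which is immediate from the definition once one notes that $G_1$ and $G_2$ share no vertices. The binomial coefficient arises purely from the normalization of volumes by the number of edges, not from any combinatorial choice.
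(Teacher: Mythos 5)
Your proof is correct and follows exactly the paper's argument: the matching polytope factors as $M_G = M_{G_1}\times M_{G_2}$ since the vertex constraints decouple, volume is multiplicative under products, and the binomial coefficient comes from the normalization by the number of edges. Nothing further is needed.
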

\begin{proof}
Clearly $M_G=M_{G_1} \times M_{G_2}$. Hence
\begin{align*}
V(G)&=n!\cdot \vol(M_G)\\
&=n! \cdot \vol(M_{G_1}) \cdot \vol(M_{G_2})\\
&=\frac{n!}{m! \cdot (m-n)!} \cdot V(G_1)\cdot V(G_2).\qedhere
\end{align*}
\end{proof}

We now present a more interesting recurrence for $V(G)$.

Let $H$ be a graph, and let $v_1$ and $v_2$ be distinct vertices of $H$. We construct three graphs $G$, $G_1$, and $G_2$ as follows. Let $G$ be the graph obtained from $H$ by adding pendant edges $\overline{v_1v_1'}$ and $\overline{v_2v_2'}$. (Then $v_1'$ and $v_2'$ are leaves in $G$.) Let $G_1$ be the graph obtained from $H$ by adding a pendant edge $\overline{v_1v_1'}$ and an edge $\overline{v_1v_2}$, and let $G_2$ be obtained from $H$ by adding a pendant edge $\overline{v_2v_2'}$ and an edge $\overline{v_1v_2}$. (See Figure~\ref{figure-leaf}.)

 \begin{figure}
 \centering
 \includegraphics[width=12cm]{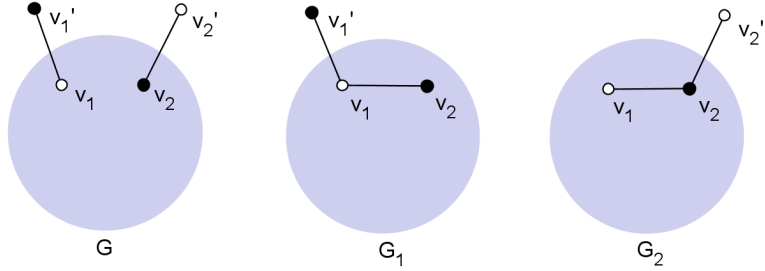}
 \caption{Three graphs as related by the leaf recurrence. They differ only in the edges marked; inside the circles the graphs are arbitrary as long as they are the same in all three cases.}
 \label{figure-leaf}
 \end{figure}

\begin{prop}[Leaf recurrence]\label{leaf}
Let $G$, $G_1$, and $G_2$ be as described above. Then $V(G)=V(G_1)+V(G_2)$.
\end{prop}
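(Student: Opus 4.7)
The plan is to compute all three volumes by Fubini, integrating out the two new edges first and showing the fiber volumes match.

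First I would set up coordinates. Each of $G$, $G_1$, and $G_2$ has exactly $n=|E(H)|+2$ edges, and the $|E(H)|$ edges inherited from $H$ carry weights $w$ that are constrained only by the vertex inequalities at vertices \emph{other} than $v_1,v_2$ (plus nonnegativity and the $H$-part of the inequalities at $v_1,v_2$). Call the set of such admissible $w$ the common base $B\subset \RR^{|E(H)|}$, and for $w\in B$ set
\[
a(w)=1-\sum_{e\ni v_1,\ e\in E(H)} w(e),\qquad b(w)=1-\sum_{e\ni v_2,\ e\in E(H)} w(e),
\]
both nonnegative on $B$. By Fubini it suffices to compare the fiber volumes of the three matching polytopes over a fixed $w\in B$.

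Next I would write down each fiber. For $G$ the two extra weights $x_1=w(\overline{v_1v_1'})$, $x_2=w(\overline{v_2v_2'})$ are decoupled, constrained by $0\le x_1\le a$ and $0\le x_2\le b$ (the leaf constraints $x_i\le 1$ are redundant), so the fiber volume is $ab$. For $G_1$ the two extra weights are $x_1=w(\overline{v_1v_1'})$ and $y=w(\overline{v_1v_2})$, subject to $x_1,y\ge 0$, $x_1+y\le a$ (at $v_1$), $y\le b$ (at $v_2$). For $G_2$ the situation is symmetric in $a,b$, giving $x_2,y\ge 0$, $x_2+y\le b$, $y\le a$.

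Now I would compute. If $a\le b$, the $G_1$-fiber simplifies to the right triangle $\{x_1,y\ge0,\ x_1+y\le a\}$ of area $a^2/2$, while the $G_2$-fiber is $\{0\le y\le a,\ 0\le x_2\le b-y\}$ of area $ab-a^2/2$; their sum is $ab$. The case $b\le a$ is symmetric and also gives sum $ab$. Hence the fiber volumes satisfy $\operatorname{vol}(\text{fiber of } M_G)=\operatorname{vol}(\text{fiber of }M_{G_1})+\operatorname{vol}(\text{fiber of }M_{G_2})$ pointwise on $B$. Integrating over $w\in B$ and multiplying by $n!$ yields $V(G)=V(G_1)+V(G_2)$.

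The one subtlety, and the step I expect to be the real content, is the piecewise case split in the two triangular fibers: individually $\operatorname{vol}(M_{G_i})$ is not a polynomial in $a$ and $b$, yet the dependence on $\min(a,b)$ cancels in the sum. Beyond that, everything is bookkeeping: one only has to be careful that the $H$-constraints at $v_1$ and $v_2$ are identical in all three polytopes (which is why the added edge $\overline{v_1v_2}$ interacts only through $a$ and $b$), and that the leaf inequalities $x_i\le 1$ and $y\le 1$ are implied by the inequalities at $v_1,v_2$ and thus add nothing.
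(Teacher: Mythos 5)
Your proof is correct. The setup is right: the projections of all three polytopes onto the $H$-coordinates coincide (each equals the set of nonnegative $H$-weightings satisfying every vertex inequality of $H$, since the fibers always contain the origin), the fiber of $M_G$ is the rectangle $[0,a]\times[0,b]$ of area $ab$, and your two fiber computations $a^2/2$ and $ab-a^2/2$ (for $a\le b$, with the symmetric case handled likewise) do sum to $ab$; Fubini then finishes the argument. This is a genuinely different presentation from the paper's. The paper avoids both Fubini and the case split on $\min(a,b)$: it cuts $M_G$ globally along the hyperplane $w_1=w_2$ into $M_G^1=M_G\cap\{w_1\ge w_2\}$ and $M_G^2=M_G\cap\{w_2\ge w_1\}$, and exhibits an explicit unimodular linear bijection $M_{G_1}\to M_G^1$ (namely $w_1=z_1+z_2$, $w_2=z_2$, where $z_2$ is the weight of $\overline{v_1v_2}$), and symmetrically for $G_2$. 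Restricted to a single fiber, the paper's shear is exactly what makes your two areas come out right, so the two arguments have the same geometric content; the paper's version is shorter and has the added benefit of producing a piecewise-linear, lattice-preserving identification of $M_G$ with $M_{G_1}\sqcup M_{G_2}$ (not just an equality of volumes), while yours is more elementary and makes the cancellation of the non-polynomial dependence on $\min(a,b)$ explicit. One cosmetic remark: $\overline{v_1v_2}$ is not a pendant edge, so there is no separate ``leaf inequality'' $y\le 1$ to dismiss --- its weight is constrained only through the inequalities at $v_1$ and $v_2$, which you have already accounted for.
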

\begin{proof}
Consider the matching polytope $M_G$, and write $w_i$ for the weight of $\overline{v_iv_i'}$. Let $M_G^1$ be the intersection of $M_G$ with the halfspace $w_1 \geq w_2$, and let $M_G^2$ be the intersection of $M_G$ with the halfspace $w_2 \geq w_1$. Clearly $\vol(M_G)=\vol(M_G^1)+\vol(M_G^2)$.

Consider the matching polytope $M_{G_1}$, and write $z_1$ for the weight of $\overline{v_1v_1'}$ and $z_2$ for the weight of $\overline{v_1v_2}$. Then for any $z \in M_{G_1}$, note that by letting $w_1=z_1+z_2$, $w_2=z_2$, and keeping all other weights the same, we obtain a point $f(z) \in M_G^1$. Moreover, $f$ is a bijection from $M_{G_1}$ to $M_G^1$: the inverse map is given by letting $z_1=w_1-w_2$, $z_2=w_2$, and keeping all other weights the same. Since $f$ is a volume-preserving linear transformation, it follows that $\vol(M_{G_1})=\vol(M_G^1)$.

An analogous argument gives that $\vol(M_{G_2})=\vol(M_G^2)$. The result follows easily.
\end{proof}
Given any function $f$ on forests, we will say that $f$ \emph{satisfies the leaf recurrence} if $f(G)=f(G_1)+f(G_2)$ for any three graphs $G$, $G_1$, and $G_2$ as described above.

We claim that the previous three propositions suffice to calculate $V(G)$ for any forest $G$.

\begin{prop} \label{V-unique}
There is a unique function $f\colon \mathcal F \to \RR$ satisfying the following properties:
\begin{enumerate}
\item For the star $T_n$ with $n$ edges and white center vertex, $f(T_n)=1$.
\item If $G_1$ and $G_2$ have $m$ and $n-m$ edges, respectively, then $f(G_1+G_2)=\binom{n}{m}f(G_1)f(G_2)$.
\item The function $f$ satisfies the leaf recurrence.
\end{enumerate}
In this case, $f(G)=V(G)$, the normalized volume of the matching polytope of $G$.
\end{prop}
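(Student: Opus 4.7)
Existence is immediate: $V$ itself satisfies properties (1), (2), and (3) by the three preceding propositions.

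For uniqueness, I plan to proceed by strong induction on $n = |E(G)|$. The cases $n \leq 1$ are handled directly by property (1). Assume that every $f : \mathcal F \to \RR$ satisfying the three properties agrees with $V$ on forests with fewer than $n$ edges. For $G \in \mathcal F$ with $n \geq 2$ edges: if $G$ is disconnected, property (2) expresses $f(G)$ in terms of components each with $<n$ edges, yielding $f(G)=V(G)$; if $G = T_n^w$, property (1) gives $f(G)=1$. The remaining case is a bipartite tree $T$ with $n$ edges and $T \neq T_n^w$.

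For such a $T$, pick any leaf $v_1'$; its unique neighbor $v_1$ has degree $\geq 2$ since $n \geq 2$, so $v_1$ has another neighbor $v_2$. Setting $H = T - v_1' - \{v_1 v_2\}$, the tree $T$ itself plays the role of $G_1$ in Proposition~\ref{leaf}. Since $T$ is a tree, $H$ has exactly two components (one containing $v_1$, one containing $v_2$), so the $G$ of Proposition~\ref{leaf} is a disjoint union of two trees each with $<n$ edges, while the $G_2$ of that proposition is another $n$-edge bipartitioned tree---obtained from $T$ by moving the pendant at $v_1$ to a new pendant $v_2'$ at $v_2$. The leaf recurrence then gives $f(T) + f(G_2) = f(G)$, with the right-hand side determined by the inductive hypothesis. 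Since $V$ satisfies the same identity, setting $g := f - V$ yields $g(T) + g(G_2) = 0$.

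Let $\Gamma_n$ be the graph whose vertices are isomorphism classes of $n$-edge bipartitioned trees and whose edges are the pairs $\{T, G_2\}$ that arise in this way. Property (1) gives $g(T_n^w) = 0$, and the relations $g(T) = -g(G_2)$ along edges of $\Gamma_n$ propagate this zero across the connected component of $T_n^w$. The main obstacle is therefore the \emph{connectedness claim}: every $n$-edge bipartitioned tree lies in the same connected component of $\Gamma_n$ as $T_n^w$. I plan to establish this by exhibiting, for any non-star tree $T$, an elementary move that strictly reduces a well-founded complexity measure on trees---for instance, the number of internal vertices drops when $\deg_T(v_1) = 2$ and $v_2$ is non-leaf; iterating such moves collapses the trunk of $T$ to a single vertex, producing a star. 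A concluding move analogous to the $n = 2$ transition $T_2^w \leftrightarrow T_2^b$ via the disjoint union $T_1 + T_1$ then adjusts the bipartition parity to reach $T_n^w$. Granting this connectedness claim, $g \equiv 0$, and hence $f = V$.
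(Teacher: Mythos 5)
Your overall strategy is the same as the paper's: reduce to trees via properties (1) and (2), then apply the leaf recurrence with the tree $T$ in the role of $G_1$, so that the corresponding $G$ is disconnected (hence known by induction on $n$) and $G_2$ is another $n$-edge tree obtained by sliding a pendant from $v_1$ to $v_2$. The relation $f(T)=f(G)-f(G_2)$ is set up correctly. The gap is in the step you yourself flag as the main obstacle: you never actually prove that the pendant-sliding moves connect every $n$-edge tree to the white-centered star, and the complexity measure you offer does not work. Counting internal vertices fails on, say, the double star (two adjacent centers, each with at least two leaves): there every leaf-neighbor has degree at least $3$, so the configuration you require ($\deg_T(v_1)=2$ with $v_2$ internal) does not exist, and the available moves either preserve or increase the internal-vertex count. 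Your concluding ``parity adjustment'' from a black-centered star to $T_n^w$ is likewise only asserted by analogy with $n=2$; for $n\geq 3$ the relevant leaf-recurrence relation involves a new non-star tree, so it cannot be dispatched in one move.

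Both problems are fixed simultaneously by a better choice of measure, which is what the paper does: root the tree at a \emph{white} vertex $v_0$, take $s=\sum_v d(v,v_0)$, and always choose $v_2$ to be the neighbor of $v_1$ on the path toward $v_0$ (taking $v_1'$ to be a leaf whose neighbor $v_1$ is not $v_0$, which exists whenever $s>n$). Then $G_2$ satisfies $s(G_2)=s(T)-1$, the induction terminates, and the terminal object $s=n$ is forced to be the star centered at the white vertex $v_0$, so the color issue never arises. You should replace your ``for instance'' measure with this one (or prove connectedness of your graph $\Gamma_n$ by some other complete argument); as written, the key claim is unproved and the proposed route to it breaks on small examples.
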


\begin{proof}
Since we have seen that $V(G)$ satisfies these three properties, it suffices to show that for any forest $G$, one can determine $f(G)$ using these properties alone.

We induct first on $n$, the number of edges of $G$. By (2), we may then assume that $G$ is connected and hence a tree. Choose a white vertex $v_0$ to be the root of $G$. We will then induct on $s$, the sum of the distances from all vertices to $v_0$. The base case is when $s=n$, which occurs only when $G$ is $T_n$, in which case $f(G)=1$ by (1).

Suppose $s>n$. Then $G$ must have a leaf $v_1'$ whose neighbor is $v_1 \neq v_0$. Let $v_2$ be the neighbor of $v_1$ closest to $v_0$, and let $G'$ be the graph obtained from $G$ by removing the edge $\overline{v_1v_1'}$ and adding a pendant edge $\overline{v_2v_2'}$. Then if $H$ is the forest obtained from $G$ by removing the edge $\overline{v_1v_2}$ and adding a pendant edge $\overline{v_2v_2'}$, we have that $f(H)=f(G)+f(G')$ by (3). Since $H$ is disconnected, we can calculate $f(H)$ by induction. But $s(G')=s(G)-1$, so we can determine $f(G')$ by induction as well. Thus we can determine $f(G)=f(H)-f(G')$, completing the proof.
\end{proof}

Note from the proof of Proposition~\ref{V-unique} that even if we only stipulate that the three conditions hold only when all graphs involved have at most $n$ edges, we still find that $f(G)=V(G)$ for all graphs with at most $n$ edges.

Although the leaf recurrence suffices to calculate $V(G)$ and serves an important purpose in Section 3, it is not very efficient for calculations. Therefore, we will introduce an alternative recurrence for calculating $V(G)$ for $G \in \mathcal F$. In fact, this second recurrence will have important ramifications in Section 3 as well. We first make the following definition.

\begin{defn} We say that a matching $M$ of $G$ is \emph{almost perfect} if every isolated edge of $G$ lies in $M$ and every non-leaf vertex of $G$ is contained in an edge of $M$. \end{defn}

The importance of almost perfect matchings lies in the following proposition.

\begin{prop} \label{almost-perfect} Let $G$ be a bipartite graph, and let $M$ be an almost perfect matching of $G$. Then
\[V(G)=\sum_{e \in M} V(G\backslash e).\] \end{prop}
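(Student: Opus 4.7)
My plan is to prove the equivalent identity
\[ n\cdot\vol(M_G) = \sum_{e \in M} \vol(M_{G\backslash e}) \]
by applying the divergence theorem on $M_G$ to the vector field $X(w) = w - \chi_M$ on $\RR^n$. This vector field has constant divergence $n$, so the interior integral equals $n\cdot\vol(M_G)$, and I would like the boundary integral $\int_{\partial M_G}\langle X,\nu\rangle\,dS$ to collapse onto exactly the facets $F_e^0 := M_G\cap\{w(e)=0\}\cong M_{G\backslash e}$ for $e\in M$.

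To set this up I would first classify the facets of $M_G$. Its defining inequalities are $w(e)\ge 0$ and $\sum_{f\ni v}w(f)\le 1$, but the vertex inequality at any leaf $v'$ is implied by the inequality at $v'$'s unique neighbor together with nonnegativity of the other edges there, unless $v'$ sits on an isolated edge. So the facets are $F_e^0$ for each edge $e$, the facet $F_v^1 := M_G\cap\{\sum_{f\ni v}w(f)=1\}$ for each non-leaf vertex $v$, and $F_e^1 := M_G\cap\{w(e)=1\}$ for each isolated edge $e$.

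Next I would compute $\langle X,\nu\rangle$ on each facet. On $F_e^0$ the outward normal points in the direction of decreasing $w(e)$ and $w(e)=0$, so $\langle X,\nu\rangle = \chi_M(e)$, giving a contribution of $\chi_M(e)\cdot\vol(M_{G\backslash e})$, nonzero only when $e\in M$. On $F_v^1$ with $v$ non-leaf, the outward normal is a positive multiple of the indicator vector $\mathbf 1_v$ of edges at $v$; by definition of the facet $\langle w,\mathbf 1_v\rangle = 1$, and the almost-perfect property gives $\langle \chi_M,\mathbf 1_v\rangle = 1$ (exactly one matching edge meets $v$), so $\langle X,\nu\rangle \equiv 0$. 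On $F_e^1$ for isolated $e$, the outward normal points in the direction of increasing $w(e)$ and $w(e)=\chi_M(e)=1$ (since isolated edges lie in $M$), so again $\langle X,\nu\rangle\equiv 0$. The divergence theorem then gives $n\cdot\vol(M_G) = \sum_{e\in M}\vol(M_{G\backslash e})$, and multiplying by $(n-1)!$ converts this to the desired identity $V(G) = \sum_{e\in M}V(G\backslash e)$.

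The key choice to get right — and where I expect the most thought — is the shift $X = w - \chi_M$: the unshifted field $X = w$ also has divergence $n$ but leaves nonzero contributions from every non-leaf vertex facet, weighted by the reciprocal degree, while a constant field has zero divergence. The shift by $\chi_M$ is precisely calibrated so that the almost-perfect hypothesis kills each non-leaf vertex-facet contribution, leaving only the desired edge-facets $F_e^0$ for $e\in M$. An alternative plan would be a direct induction on $|E(G)|$ using the leaf recurrence (Proposition~\ref{leaf}) together with Proposition~\ref{V-unique}, but the divergence argument is substantially cleaner and applies to arbitrary bipartite $G$.
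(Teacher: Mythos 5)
Your proof is correct, and the computations all check out: the leaf constraints away from isolated edges are indeed redundant, the flux of $X(w)=w-\chi_M$ through a vertex-facet $F_v^1$ vanishes precisely because the almost-perfect matching covers each non-leaf vertex exactly once, and the flux through $F_e^0$ is $\chi_M(e)\cdot\vol(M_{G\backslash e})$. That said, this is really the paper's argument in analytic clothing rather than a genuinely different route. The paper partitions $M_G$ into the cones $C_e$ with apex $\chi_M$ and bases $M_{G\backslash e}$, $e\in M$; your divergence-theorem identity is exactly the pyramid-decomposition formula for that apex, since $\langle X,\nu\rangle$ on a facet is the signed distance from $\chi_M$ to the facet's hyperplane. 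Thus your verification that the flux vanishes on every facet other than the $F_e^0$ with $e\in M$ is the same computation as the paper's verification that $\chi_M$ lies on all of those other facets (so the corresponding cones are degenerate), and both proofs rest on the identical facet analysis. The trade-off: the paper's version produces an explicit subdivision of $M_G$ (with the combinatorial membership rule that $w\in C_{e_0}$ when $w(e_0)$ is minimal among $w(e)$, $e\in M$), which is more information than the volume identity alone; your version avoids the covering-and-disjointness check at the cost of invoking the divergence theorem, and it isolates cleanly why the almost-perfect hypothesis is exactly what is needed. If you write this up, add one sentence noting that distinct facets meet in sets of dimension at most $n-2$ and that the redundant leaf constraints cut out faces of codimension at least two, so the surface integral is precisely the sum over the facets you list.
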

\begin{proof}
We claim that it is possible to partition $M_G$ into cones $C_e$ of volume $\frac 1{n!}V(G\backslash e)$ for $e \in M$. Note that $M_{G\backslash e}$ embeds naturally in $M_G$: in fact, $M_{G\backslash e}=M_G \cap \{w \mid w(e)=0\}$. For $e \in M$, let $C_e$ for the cone with vertex $\chi_M$ and base $M_{G \backslash e}$. Since $C_e$ has height 1, it has volume $\frac 1n\vol (M_{G \backslash e}) = \frac 1{n!}V(G\backslash e)$, so it suffices to show that the $C_e$ partition $M_G$ (up to a measure zero set).

Let $w \in M_G$, and let $e_0$ be such that $w(e_0)=t$ is minimum among all $w(e)$ for $e \in M$. Let \[w'=\frac{1}{1-t}(w-t \cdot \chi_M),\] so that $w=t \cdot \chi_M + (1-t) \cdot w'$. Clearly $0 \leq t \leq 1$. By our choice of $e_0$, $w'$ is a nonnegative weighting of $G \backslash e_0$. We claim that it lies in $M_G$ (and hence in $M_{G \backslash e_0}$), which will imply that $w \in C_{e_0}$.

Note that although $M_G$ is defined by an inequality for each vertex of $G$, the condition at a leaf that is not part of an isolated edge is redundant, for it is superseded by the condition at the adjacent vertex. Therefore, to check that $w' \in M_G$, it suffices to check that the sum of the weights of the edges incident to any non-leaf is at most $1$ and that the weight on any isolated edge is at most $1$. If $v$ is a non-leaf of $G$, then
\[
\sum_{e \ni v} w'(e)=\frac{1}{1-t}\left(\sum_{e \ni v} w(e)-t \cdot \sum_{e \ni v} \chi_M(e)\right)
=\frac{1}{1-t}\left(\sum_{e \ni v} w(e)-t\right)
\leq 1.
\]
Similarly, if $e$ is an isolated edge of $G$, then $w'(e)=\frac{1}{1-t}(w(e)-t) \leq 1$. It follows that $w' \in M_{G \backslash e_0}$, so $w \in C_{e_0}$. Therefore the cones $C_e$ for $e \in M$ cover $M_G$.

We now show that the $C_e$ have disjoint interiors. Suppose that $w$ lies in the interior of $C_{e_0}$. Then we can write $w = t\cdot \chi_M + (1-t)\cdot w'$, where $0<t<1$ and $w'$ lies in the interior of $M_{G \backslash e_0}$, so $w(e)=t+(1-t)\cdot w'(e)$ for $e \in M$. Since $w'$ lies in the interior of $M_{G \backslash e_0}$, we have that $w'(e_0)=0$, but $w'(e)>0$ for all other $e \in M$. Therefore, $e_0$ is uniquely determined: $w(e_0)$ is the unique minimum among all $w(e)$ for $e \in M$. This proves the result.
\end{proof}

Not all graphs have almost perfect matchings, but every forest does.
\begin{prop}
Every forest has an almost perfect matching.
\end{prop}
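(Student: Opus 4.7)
My plan is to reduce to the case of a single tree $T$, since an almost perfect matching of a forest is simply the union of almost perfect matchings of its connected components. For $|V(T)|=2$ the tree is itself an isolated edge of the forest, so taking $M=E(T)$ works; for $|V(T)|\ge 3$ the tree $T$ has no isolated edges, so I need only produce a matching of $T$ covering every non-leaf.

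To handle this case, I would let $M$ be a maximum matching of $T$ chosen, among all maximum matchings, to cover the largest number of non-leaves, and then argue that $M$ covers every non-leaf. Suppose not, and let $v$ be an uncovered non-leaf. Since $v$ has at least two neighbors and $M$ is maximum, every neighbor of $v$ is matched in $M$ (else we could enlarge $M$). I would then root $T$ at $v$ and run an alternating breadth-first search from $v$, with edges from even levels to odd required to be unmatched and edges from odd levels to even required to be matched.

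The key structural point is that at each odd-level vertex $u$ the BFS is forced to extend: $u$ must be matched (else the $v$-to-$u$ alternating path would be augmenting, contradicting maximality of $M$), and the matched edge at $u$ cannot be to its BFS parent (which is the unmatched edge that brought $u$ into the BFS), so it leads to a child of $u$ in the rooted tree. At each even-level vertex the BFS extends iff that vertex has children. Since $T$ is finite, the search must halt, and the halt can only occur at some even-level vertex $\ell$ with no children -- that is, at a leaf of $T$ sitting at some even level $2k\ge 2$.

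Finally I would take the alternating path $v=x_0,x_1,\ldots,x_{2k}=\ell$ and form $M'$ by flipping matched and unmatched edges along it. Then $M'$ is a matching of the same size as $M$ in which $v$ is now covered and $\ell$ is no longer covered (every other vertex on the path remains covered, possibly by a different partner). Since $v$ is a non-leaf and $\ell$ is a leaf, $M'$ covers strictly more non-leaves than $M$, contradicting our choice of $M$; hence $M$ must have already been almost perfect. The main step to justify carefully is the termination claim -- that the alternating BFS, forced to extend at every odd level, must halt at an even-level leaf of $T$ -- and this is where the tree structure and the no-augmenting-path property of a maximum matching are both used.
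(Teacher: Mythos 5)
Your proof is correct, but it takes a genuinely different route from the paper's. The paper strengthens the induction hypothesis (every rooted tree with an edge has an almost perfect matching covering the root), picks an arbitrary edge at the root, deletes both its endpoints, and recurses on the resulting components rooted at the vertices nearest the deleted edge; the union of the recursively obtained matchings with the chosen edge is checked to be almost perfect. You instead use an extremal exchange argument: among maximum matchings take one covering the most non-leaves, and if a non-leaf $v$ is uncovered, your alternating search from $v$ is forced through a matched edge into a child at every odd level (by the no-augmenting-path property) and so must terminate at a leaf of $T$ at even depth; flipping that alternating path trades coverage of a leaf for coverage of $v$, contradicting extremality. Both arguments are sound. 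The paper's induction is shorter and purely constructive; yours is self-contained modulo standard alternating-path reasoning and yields the slightly stronger conclusion that the almost perfect matching can be taken to be a \emph{maximum} matching of the forest. Your reduction to components and your handling of the two base cases ($|V(T)|=2$ giving an isolated edge, $|V(T)|\ge 3$ having none) are both fine, and you correctly flag the termination claim as the step needing care -- the justification is exactly that every branch of the search that stalls must stall at an even level, since odd-level vertices always extend.
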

\begin{proof}
It suffices to show that every rooted tree $G$ with at least one edge has an almost perfect matching $M$ such that the root lies in an edge of $M$. We induct on the number of edges of $G$, with the base case being trivial. Choose any edge $e$ incident to the root, and consider the forest $G'$ obtained from $G$ by removing both endpoints of $e$ as well as any edge incident to either endpoint. Root each component of $G'$ at the vertex that was closest to $e$ in $G$. By induction, any component with at least one edge has an almost perfect matching containing an edge that contains the root. It is then easy to check that the union of these matchings together with $e$ is an almost perfect matching of $G$.
\end{proof}

It follows that we can use Proposition~\ref{almost-perfect} to recursively compute $V(G)$ for any $G \in \mathcal F$. One way to express this is to say that $V(G)$ counts the number of standard labelings of $G$ in the following sense.

\begin{defn}
Fix an almost perfect matching $M(G)$ for every $G \in \mathcal F$. For any $G \in \mathcal F$, we say that an edge labeling $z\colon E \to \NN$ is \emph{standard} if $z$ is a bijection between $E$ and $[n]=\{1, \dots, n\}$ such that $z^{-1}(n) \in M(G)$ and, if $n>1$, $z|_{G\backslash z^{-1}(n)}$ is a standard labeling of $G \backslash z^{-1}(n)$.
\end{defn}

The following proposition is then immediate.

\begin{prop} \label{standard}
For any $G \in \mathcal F$, $V(G)$ is the number of standard labelings of $G$.
\end{prop}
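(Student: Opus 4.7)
The plan is to induct on $n$, the number of edges of $G$, using the recurrence from Proposition~\ref{almost-perfect} as the engine and the recursive structure of the definition of standard labeling itself to match terms.

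For the base case $n=1$, the forest $G$ consists of a single edge, so $V(G)=V(T_1)=1$ and the fixed almost perfect matching $M(G)$ consists of that single edge. There is exactly one bijection $z\colon E \to \{1\}$, and it is standard because $z^{-1}(1)$ is the unique edge of $M(G)$ and the condition on $G\backslash z^{-1}(1)$ is vacuous. So both sides equal $1$.

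For the inductive step, I would unpack the definition: a standard labeling of $G$ is determined uniquely by the pair consisting of the edge $e = z^{-1}(n)$, which must lie in $M(G)$, together with the restriction $z|_{G\backslash e}$, which is required to be a standard labeling of $G\backslash e$ (with respect to its own fixed almost perfect matching $M(G\backslash e)$). Conversely, any edge $e \in M(G)$ together with a standard labeling of $G\backslash e$ assembles into a standard labeling of $G$ by assigning $n$ to $e$. Hence the number of standard labelings of $G$ equals
\[\sum_{e \in M(G)} \#\{\text{standard labelings of } G\backslash e\}.\]
By the inductive hypothesis, each summand equals $V(G\backslash e)$, so this sum is $\sum_{e \in M(G)} V(G\backslash e)$, which equals $V(G)$ by Proposition~\ref{almost-perfect}.

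There is essentially no obstacle here; the proposition is almost a tautological consequence of Proposition~\ref{almost-perfect} once one recognizes that the recursive definition of standard labeling is exactly the combinatorial unfolding of that recurrence. The only mild subtlety is to verify that $G\backslash e$ is itself in $\mathcal{F}$ (a bipartitioned forest, possibly disconnected, with no isolated vertices after discarding them) so that the inductive hypothesis applies and $M(G\backslash e)$ is defined; this is immediate since deleting an edge from a forest yields a forest, and isolated vertices are ignored by convention.
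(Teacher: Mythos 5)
Your proof is correct and is exactly the paper's argument, just written out in full: the paper's one-line proof observes that both $V(G)$ and the number of standard labelings satisfy the recurrence of Proposition~\ref{almost-perfect}, which is precisely the induction you carry out. No gaps.
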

\begin{proof}
Both $V(G)$ and the number of standard labelings of $G$ satisfy the recurrence in Proposition~\ref{almost-perfect}.
\end{proof}

These standard labelings will turn out to be analogues of standard Young tableaux. We will also define analogues of semistandard Young tableaux in section 5.

Armed with the results of this section, we will now be able to draw the connection between matching polytopes and Specht modules.


\section{Specht modules}

In this section, we first define the notion of a Specht module for diagrams and bipartite graphs. We will also summarize some known results about them. We will then state and prove our main theorem. As an immediate corollary, we will also derive a branching rule for Specht modules of forests.

(For more information on the basic background needed for this section, see, for instance, \cite{Sagan}.)

Consider an array of unit lattice boxes in the plane. We will write $(i,j)$ to denote the box in the $i$th row from the top and the $j$th column from the left, where $i$ and $j$ are positive integers. By a \emph{diagram}, we will mean any subset of these boxes.

For instance, a \emph{partition} $\lambda=(\lambda_1, \lambda_2, \dots)$ of $|\lambda|=n$ is a sequence of weakly decreasing nonnegative integers summing to $n$. (We may add or ignore trailing zeroes to $\lambda$ as convenient.) Then the \emph{Young diagram} of $\lambda$ consists of all boxes $(i,j)$ with $j \leq \lambda_i$. We may refer to a partition and its Young diagram interchangeably. If $\lambda$ and $\mu$ are partitions such that $\mu_i \leq \lambda_i$ for all $i$, then the \emph{skew Young diagram} $\lambda/\mu$ consists of all boxes in $\lambda$ not in $\mu$.

Given a diagram $D$, a \emph{tableau} of shape $D$ is a filling of the boxes of $D$ with positive integers. In the case of a (skew) Young diagram, a tableau $T$ is called a \emph{semistandard Young tableau} if the rows are weakly increasing and the columns are strictly increasing. A semistandard Young tableau is \emph{standard} if it contains only the numbers $1, 2, \dots, n$, each exactly once.

Consider any diagram $D$ of $n$ boxes. Order the boxes of $D$ arbitrarily, and let the symmetric group $\Sigma_n$ act on them in the obvious way. Let $R_D$ be the subgroup containing those $\sigma \in \Sigma_n$ that stabilize each row of $D$, and likewise define $C_D$ for columns of $D$. Let $\CC[\Sigma_n]$ denote the group algebra over $\Sigma_n$, and consider elements  
\[R(D) = \sum_{\sigma \in R_D} \sigma \mbox{ and }
C(D)=\sum_{\sigma \in C_D} \sgn(\sigma)\sigma.\]

\begin{defn}
The \emph{Specht module} (over $\CC$) of $D$ is the left ideal
\[S^D=\CC[\Sigma_n]C(D)R(D).\]
\end{defn}

Clearly $S^D$ is a representation of $\Sigma_n$ by left multiplication. We may interpret basis elements of $\CC[\Sigma_n]$ as bijections from $D$ to $[n]$, or, equivalently, as tableaux of shape $D$ with labels $1, 2, \dots, n$. Then multiplying on the right corresponds to applying a permutation to the boxes of $D$, while the action of $\Sigma_n$ on the left corresponds to applying a permutation to the labels $[n]$.

It is well known that the irreducible representations of $\Sigma_n$ are exactly the Specht modules $S^{\lambda}$, where $\lambda$ ranges over all partitions of $n$. Moreover, the dimension of $S^\lambda$ is $f^\lambda$, the number of standard Young tableaux of shape $\lambda$. 

Given two diagrams $D$ and $E$, let $D \oplus E$ be the diagram obtained by drawing them so that no box of $D$ lies in the same row or column as a box of $E$. In other words, choose some positive integer $N$ such that no box of $D$ has a coordinate larger than $N$, and then let $D \oplus E=D \cup \tilde E$, where $\tilde E$ is the same diagram as $E$ but shifted by the vector $(N,N)$. If $D$ has $m$ boxes and $E$ has $n-m$ boxes, then we can think of $\Sigma_m$ and $\Sigma_{n-m}$ acting on $D$ and $E$ independently (where we think of $\Sigma_m \subset \Sigma_{n}$ as acting on the first $m$ letters and $\Sigma_{n-m} \subset \Sigma_{n}$ as acting on the last $n-m$ letters). Then it follows that
\[S^{D \oplus E}=\operatorname{Ind}_{\Sigma_m \times \Sigma_{n-m}}^{\Sigma_{n}}(S^D \otimes S^E).\]

Let us write $c_\lambda^D$ for the multiplicity of $S^{\lambda}$ in $S^D$, so that
\[S^D \cong \bigoplus (S^\lambda)^{\oplus c_\lambda^D}.\]
In the case when $D$ is a skew shape, the coefficients $c_\lambda^D$ are the famous \emph{Littlewood-Richardson coefficients}, and there are a number of combinatorial ways to calculate them, collectively called \emph{Littlewood-Richardson rules}. However, the question of how to compute $\dim S^D$ or $c_\lambda^D$ for an arbitrary diagram $D$ is still open. The most general known result, due to Reiner and Shimozono \cite{ReinerShimozono},
applies to so-called \emph{percentage-avoiding} diagrams.
\medskip

Although Specht modules are traditionally defined for diagrams, we can also define the Specht module of a bipartite graph. Indeed, observe that permuting the rows or permuting the columns of $D$ does not change the structure of $S^D$ up to isomorphism. Therefore, for any (bipartitioned) bipartite graph $G$, we can define the Specht module $S^G$ as follows.

Number the white vertices of $G$ with positive integers, and likewise number the black vertices. Then let $D=D(G)$ be the diagram that contains the box $(i,j)$ if and only if $G$ contains an edge connecting white vertex $i$ and black vertex $j$. By our observation, the structure of $S^D$ does not depend on the labeling of the vertices of $G$, so we will write $S^G$ for $S^{D(G)}$. We will also sometimes refer to the graph $G$ and the diagram $D$ interchangeably. We will say two diagrams are equivalent if their underlying graphs are the same. (We may also implicitly identify them at times as convenient.)

In other words, we let $\Sigma_n$ be the symmetric group on the edges of $G$. Then $C_G$ corresponds to the subgroup that stabilizes the incident edges to any black vertex and $R_G$ to the subgroup that stabilize the incident edges to any white vertex. We can then form the Specht module $S^G=\CC[\Sigma_n]C(G)R(G)$ exactly as before.

We can now state our main theorem on the Specht module of a forest.

\begin{thm} \label{main-theorem}
For all $G \in \mathcal F$, $\dim S^G = V(G)$.
\end{thm}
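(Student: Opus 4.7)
My plan is to apply Proposition~\ref{V-unique}: it suffices to verify that $\dim S^G$ satisfies the three defining properties of $V$, namely (i) $\dim S^{T_n} = 1$ for the star $T_n$ with white center, (ii) the product rule $\dim S^{G_1 + G_2} = \binom{n}{m} \dim S^{G_1} \dim S^{G_2}$, and (iii) the leaf recurrence. Uniqueness then forces $\dim S^G = V(G)$.

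Property (i) is immediate: $D(T_n)$ is a single row of $n$ boxes, so $S^{T_n}$ is the Specht module of the partition $(n)$, namely the trivial one-dimensional representation. Property (ii) follows at once by taking dimensions in the formula
\[S^{G_1+G_2} \cong \operatorname{Ind}_{\Sigma_m \times \Sigma_{n-m}}^{\Sigma_n}(S^{G_1} \otimes S^{G_2})\]
already recorded in the paper, since inducing from a subgroup of index $\binom{n}{m}$ multiplies dimensions by $\binom{n}{m}$.

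The crux is property (iii). My plan is to build a short exact sequence of $\Sigma_n$-modules
\[0 \to S^{G_2} \to S^G \to S^{G_1} \to 0,\]
from which additivity of dimensions is immediate. Assuming (without loss of generality) that $v_1$ is white and $v_2$ is black, the three diagrams differ only in a configuration of three boxes $A = (v_1,v_1')$, $B = (v_2',v_2)$, $C = (v_1, v_2)$: one has $D(G)=D(H)\cup\{A,B\}$, $D(G_1)=D(H)\cup\{A,C\}$, and $D(G_2)=D(H)\cup\{B,C\}$, where $A,C$ lie in the same row and $B,C$ lie in the same column. Fixing a common edge-labeling of $H$ and unpacking the generators $C(D)R(D)$, one sees that $C(G_1)R(G_1)$ is obtained from $C(G)R(G)$ by enlarging the row-$v_1$ symmetrizer to incorporate one additional label, while $C(G_2)R(G_2)$ is obtained by analogously enlarging the column-$v_2$ antisymmetrizer.

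The surjection $S^G \to S^{G_1}$ will be realized by right-multiplication by an appropriate coset sum effecting this row enlargement, and its kernel should be identified with the image of $S^{G_2}$ under an analogous column-antisymmetrization map $S^{G_2}\hookrightarrow S^G$. The main obstacle is verifying that these two maps are well-defined $\Sigma_n$-equivariant maps on the left ideals and that the kernel/image claim really holds; this will likely rest on Garnir-type relations applied to the three-box configuration, or on a James--Peel style box-move short exact sequence as in \cite{JamesPeel}. As an alternative route, one could hope to lift the geometric splitting $M_G = M_G^1 \cup M_G^2$ of Proposition~\ref{leaf} to an algebraic decomposition of $S^G$ into two submodules of the desired dimensions.
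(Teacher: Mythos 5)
Your treatment of properties (i) and (ii) matches the paper and is fine. The gap is in property (iii): the short exact sequence $0 \to S^{G_2} \to S^{G} \to S^{G_1} \to 0$ is precisely the hard content of the theorem, and you have deferred its proof to ``Garnir-type relations'' or ``a James--Peel style box-move short exact sequence.'' No such exact sequence is available off the shelf. The James--Peel lemma (Lemma~\ref{JP}), applied to the split of $D(G)$ into $D(G_1)$ and $D(G_2)$, gives only a homomorphism $\varphi\colon S^{G}\to S^{G_1}$ together with an embedded copy of $S^{G_2}$ contained in $\ker\varphi$; the kernel can be strictly larger, and for general diagrams one gets only the inequality $\dim S^{G}\ge \dim S^{G_1}+\dim S^{G_2}$, with equality genuinely failing. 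This is why the paper states Proposition~\ref{easy} as an inequality and obtains the splitting $S^{G}\cong S^{G_1}\oplus S^{G_2}$ only as Corollary~\ref{exact-split}, i.e.\ as a \emph{consequence} of the theorem rather than an ingredient of its proof. Your fallback suggestion of ``lifting'' the geometric decomposition $M_G=M_G^1\cup M_G^2$ to the module is likewise only a hope, not an argument.

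What is missing is an independent bound closing the James--Peel inequality. The paper supplies it by a second, unrelated mechanism: restricting $S^{D}$ from $\Sigma_n$ to $\Sigma_{n-1}$ along a special transversal corresponding to an almost perfect matching (Propositions~\ref{special-transversal} and~\ref{restrict}) gives $\dim S^{G}\ge\sum_{e\in M}\dim S^{G\backslash e}$, hence $\dim S^{G}\ge V(G)$ by induction together with Proposition~\ref{almost-perfect} (this is Proposition~\ref{hard}). In the instances of the leaf recurrence actually needed for Proposition~\ref{V-unique}, the large graph $G$ is disconnected, so $\dim S^{G}=V(G)$ is already known from property (ii) and induction; the chain $V(G)=\dim S^{G}\ge\dim S^{G_1}+\dim S^{G_2}\ge V(G_1)+V(G_2)=V(G)$ then forces equality everywhere. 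You need either this restriction argument or some genuine substitute for it; as written, your proposal establishes only the inequality $\dim S^{G}\ge\dim S^{G_1}+\dim S^{G_2}$ and does not prove the leaf recurrence.
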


It is worth noting that the general forest $G$ is not covered by Reiner and Shimozono's result on percentage-avoiding diagrams \cite{ReinerShimozono}.

To prove this theorem, we will show that $\dim S^G$ satisfies the properties of Proposition~\ref{V-unique}. While the first two properties are easy, the third will require a bit of work.

\medskip

The first ingredient is the following lemma by James and Peel \cite{JamesPeel}.

Given a diagram $D$, choose two boxes $(i_1, j_1), (i_2, j_2) \in D$ such that $(i_1, j_2), (i_2, j_1) \notin D$. We form the diagram $D^A$ as follows. For every row except $i_1$ and $i_2$, $D^A$ contains exactly the same boxes as $D$. Also, $D^A$ contains the box $(i_1,j)$ if and only if $D$ contains both $(i_1,j)$ and $(i_2,j)$, while $D^A$ contains the box $(i_2,j)$ if and only if $D$ contains either $(i_1,j)$ or $(i_2,j)$.

Informally, $D^A$ is obtained from $D$ by shifting boxes from row $i_1$ to row $i_2$ if possible. Let us similarly define $D^B$ using columns $j_1$ and $j_2$. (We will refer to this process as \emph{splitting} $D$ into $D^A$ and $D^B$ using boxes $(i_1,j_1)$ and $(i_2,j_2)$.) We then have the following result, which we present without proof.

\begin{lemma}\label{JP}
There exists a $\CC[\Sigma_n]$-module homomorphism of Specht modules $\varphi \colon S^D \to S^{D^A}$. Moreover, $S^{D^B} \subset S^D$ and $\varphi(S^{D^B})=0$.
\end{lemma}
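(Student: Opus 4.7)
My plan is to work in the polytabloid model of Specht modules. Realize $S^D$ as the subspace of the permutation module $M^D$ on row-tabloids spanned by polytabloids $e_T = \kappa_T \{T\}$, where $T$ is a tableau of shape $D$ (a bijection from the boxes of $D$ to $[n]$), $\{T\}$ is its row-tabloid, and $\kappa_T = \sum_{\sigma \in C_T}\sgn(\sigma)\sigma$ is the column antisymmetrizer. For each such $T$, let $T^A$ be the tableau of shape $D^A$ obtained by moving the entry at $(i_1,j)$ to position $(i_2,j)$ for every $j$ in $X := \{j : (i_1,j) \in D,\ (i_2,j) \notin D\}$, leaving all other entries fixed. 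The assignment $T \mapsto T^A$ is $\Sigma_n$-equivariant, and I would set $\varphi(e_T) := e_{T^A}$.

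The observation that makes $\varphi$ plausible is that the multiset of entries in each column of $T^A$ matches that in the corresponding column of $T$: for $j \notin X$ the column is literally unchanged, while for $j \in X$ the unique entry in rows $\{i_1, i_2\}$ merely shifts within column $j$. Hence $C_T = C_{T^A}$ and $\kappa_T = \kappa_{T^A}$. Since $S^D = \CC[\Sigma_n]\cdot e_{T_0}$ is cyclic for any fixed $T_0$, the well-definedness of $\varphi$ reduces to an inclusion of left annihilators in $\CC[\Sigma_n]$; via $M^D \cong \CC[\Sigma_n]/\CC[\Sigma_n](1-R_{T_0})$, this amounts to the implication $\alpha\,\kappa\,R(T_0) = 0 \Rightarrow \alpha\,\kappa\,R(T_0^A) = 0$, where $R(T) := \sum_{r \in R_T} r$. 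Writing $A$ for the set of moved entries, $B$ for the remaining row-$i_1$ entries of $T_0$, and $C$ for the row-$i_2$ entries, the two row symmetrizers differ by factors $R(\Sigma_{A\cup B})R(\Sigma_C)$ vs.\ $R(\Sigma_B)R(\Sigma_{A \cup C})$. I would verify the implication through an explicit identity $\kappa\,R(T_0^A) = \kappa\,R(T_0)\,\gamma$ for a suitable $\gamma \in \CC[\Sigma_n]$ built from coset representatives of $\Sigma_A \times \Sigma_C$ in $\Sigma_{A\cup C}$; the crucial structural point is that each element of $A$ occupies a singleton column within rows $\{i_1, i_2\}$, which is what lets $\kappa$ absorb the mixing between the two rows.

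For the inclusion $S^{D^B} \hookrightarrow S^D$, I would apply the dual construction after transposing diagrams: since transposition exchanges rows and columns and tensors Specht modules with the sign representation, the transposed analog of $\varphi$ yields, after untwisting, the required embedding. To prove $\varphi(S^{D^B}) = 0$, I would use a Garnir cancellation: polytabloids in $S^{D^B}$ carry an antisymmetrization over the enlarged column $j_2$ of $D^B$, whose entries are those in columns $j_1$ and $j_2$ of $D$; after passing through $\varphi$, two of these entries end up in the same row of $D^A$, where the permutation module $M^{D^A}$ symmetrizes them, and antisymmetrization over a row-symmetric pair forces vanishing. The main obstacle I anticipate is precisely the verification of well-definedness for $\varphi$: though $\kappa_T = \kappa_{T^A}$ is a clean starting point, the row symmetrizers genuinely differ, and producing the combinatorial identity relating them modulo the column antisymmetrizer is the technical heart of the proof.
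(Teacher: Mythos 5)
The paper itself offers no proof of this lemma (it is quoted from James and Peel), so your attempt can only be judged against the standard argument, which your outline essentially reproduces: the assignment $e_T \mapsto e_{T^A}$, the observation $\kappa_T=\kappa_{T^A}$, and a Garnir-type cancellation are exactly the right ingredients. The first real gap is that the step you yourself flag as the heart of the proof --- well-definedness --- is not carried out. Your reduction to an identity $\kappa\,R(T_0^A)=\kappa\,R(T_0)\,\gamma$ is legitimate (over $\CC$, semisimplicity makes the annihilator inclusion equivalent to $\kappa R(T_0^A)$ lying in the right ideal generated by $\kappa R(T_0)$), and the coset-sum $\gamma$ you name is the correct one. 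But expanding $\kappa R(T_0)\gamma$ produces one term for each choice of which $|A|$ entries of row $i_1$ migrate to row $i_2$, and you must show that every term except the one moving exactly the set $A$ dies. The mechanism is the complement of the ``structural point'' you cite: an entry of row $i_1$ sitting in a column $j\notin X$ shares that column with an entry of row $i_2$, so any term moving it down places two column-mates of $T_0$ into a common row, where the antisymmetrizer kills the tabloid (the standard fact that $\kappa_T\{S\}=0$ whenever two entries share a column of $T$ and a row of $S$). The cleanest way to package this is to define $\varphi$ from the outset as the restriction to $S^D$ of the permutation-module homomorphism $M^D\to M^{D^A}$ sending a tabloid to the sum of all tabloids obtained by moving $|A|$ entries from row $i_1$ to row $i_2$; well-definedness is then automatic and the same vanishing computation gives $\varphi(e_T)=e_{T^A}$, together with surjectivity (which the application in Proposition~\ref{easy} silently requires).

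The second, more serious gap concerns $S^{D^B}$. Transposing and twisting by sign produces a surjection $S^D\twoheadrightarrow S^{D^B}$, and semisimplicity then yields \emph{some} submodule of $S^D$ isomorphic to $S^{D^B}$ --- but only an abstract, non-canonical copy. The assertion $\varphi(S^{D^B})=0$ is about a specific embedded copy: if $S^{D^A}$ and $S^{D^B}$ share irreducible constituents, other isomorphic copies of $S^{D^B}$ inside $S^D$ need not lie in $\ker\varphi$, and without pinning the copy down the inequality $\dim S^D\geq\dim S^{D^A}+\dim S^{D^B}$ does not follow. You need the concrete realization: show that $C(D^B)R(D^B)$ (equivalently the hybrid element $\kappa_{T^B}\{T\}$ in $M^D$) actually lies in $S^D$ and let $S^{D^B}$ be the left ideal it generates. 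Then your Garnir cancellation --- the entries of boxes $(i_1,j_1)$ and $(i_2,j_2)$ are antisymmetrized in column $j_2$ of $D^B$ but both land in row $i_2$ of $D^A$ --- applies verbatim to these concrete generators and completes the proof.
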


This allows us to easily prove part of the leaf recurrence for $\dim S^G$.

\begin{prop} \label{easy}
Let $G$, $G_1$, and $G_2$ be three forests as appearing in Proposition~\ref{leaf} (the leaf recurrence). Then $\dim S^G \geq \dim S^{G_1}+\dim S^{G_2}$.
\end{prop}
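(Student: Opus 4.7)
My plan is to apply the James-Peel splitting from Lemma~\ref{JP} to the diagram $D(G)$ using a carefully chosen pair of boxes, so that the resulting diagrams $D^A$ and $D^B$ are equivalent, as bipartitioned graphs, to $G_1$ and $G_2$. Since $G_1$ and $G_2$ both contain the edge $\overline{v_1v_2}$, the vertices $v_1$ and $v_2$ must lie in opposite parts of the bipartition; I will assume $v_1$ is white and $v_2$ is black (so $v_1'$ is black and $v_2'$ is white) and, in $D(G)$, index rows by white vertices and columns by black vertices. I will split using the boxes $(i_1, j_1)$ and $(i_2, j_2)$ where row $i_1$ is $v_1$, row $i_2$ is $v_2'$, column $j_1$ is $v_1'$, and column $j_2$ is $v_2$. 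The hypothesis of the lemma --- that $(i_1, j_1), (i_2, j_2) \in D(G)$ but $(i_1, j_2), (i_2, j_1) \notin D(G)$ --- is automatic: the first two are the two pendant edges, while the last two would require the edge $\overline{v_1v_2}$ (absent from $G$) and an edge between $v_2'$ and $v_1'$ (absent because $v_2'$ is a leaf adjacent only to $v_2$).

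The heart of the argument is the identification of $D^A$ and $D^B$. Row $i_2$ contains only the single box $(i_2, j_2)$, since $v_2'$ is a leaf, and row $i_1$ misses column $j_2$; so when we form $D^A$ by shifting boxes from row $i_1$ into row $i_2$, row $i_1$ empties entirely and every former box of row $i_1$ joins the one existing box in row $i_2$. Discarding the now-isolated white vertex $v_1$ and renaming $v_2'$ to $v_1$, the resulting bipartitioned graph has precisely the edge set of $G_1$. A completely symmetric column calculation identifies $D^B$ with $G_2$. With these identifications in hand, Lemma~\ref{JP} supplies a surjective homomorphism $\varphi \colon S^G \to S^{G_1}$ together with a submodule $S^{G_2} \subset S^G$ satisfying $S^{G_2} \subset \ker \varphi$, so that $\dim S^G = \dim \operatorname{Im}\varphi + \dim \ker \varphi \geq \dim S^{G_1} + \dim S^{G_2}$.

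The main obstacle is the combinatorial identification of $D^A$ and $D^B$ with $G_1$ and $G_2$; everything else is formal. This identification works precisely because $v_1'$ and $v_2'$ are leaves in $G$, which forces the rows/columns being absorbed in the splitting to contain a single box each. One minor technical point is that Lemma~\ref{JP} as stated only asserts the existence of $\varphi$; I will implicitly be using the fact from James and Peel that this $\varphi$ is surjective, which follows because it can be realized as right multiplication by an element sending the Specht generator $C(D)R(D)$ of $S^G$ to $C(D^A)R(D^A)$ up to sign.
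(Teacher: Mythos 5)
Your proposal is correct and follows the same route as the paper: the paper's proof simply asserts that ``the diagram of $G$ splits into $G_1$ and $G_2$'' and invokes Lemma~\ref{JP}, and your argument supplies exactly the verification of that splitting (with the right choice of boxes and the correct identification of $D^A$, $D^B$ with $G_1$, $G_2$ after discarding an empty row/column). Your remark about needing surjectivity of $\varphi$, implicit in the paper's use of the lemma but not in its statement, is a fair and correctly resolved technical point.
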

\begin{proof}
It is easy to check that the diagram of $G$ splits into $G_1$ and $G_2$. Then by Lemma~\ref{JP}, $S^G$ contains a subrepresentation isomorphic to $S^{G_1} \oplus S^{G_2}$. The result follows.
\end{proof}

The second ingredient will be a lower bound on $\dim S^G$ obtained by considering $S^G$ as a representation of $\Sigma_{n-1}$ instead of $\Sigma_n$. We first begin with a definition.

\begin{defn}
We say that a matching $M$ of a bipartite graph $G$ is \emph{special} if there does not exist a cycle of $G$ half of whose edges lie in $M$.
\end{defn}

Note that every matching in a forest is special since there are no cycles.

\begin{defn}
A subset $U$ of boxes in a diagram $D$ is called a \emph{transversal} if no two boxes of $U$ lie in the same row or column.
\end{defn}

Clearly a transversal of a diagram corresponds to a matching of the corresponding bipartite graph.

\begin{prop} \label{special-transversal}
Let $U$ be a transversal of a diagram $D$, and let $E$ be the subdiagram of $D$ given by the intersection of the rows and columns of $D$ containing boxes in $U$. Then the following are equivalent:
\begin{enumerate}
\item The edges corresponding to $U$ in the graph of $D$ form a special matching.
\item The set $U$ is the unique transversal in $E$ of size $|U|$.
\item There exists a diagram equivalent to $E$ such that the boxes corresponding to $U$ lie along the main diagonal and all other boxes of $E$ lie below the main diagonal.
\end{enumerate}
\end{prop}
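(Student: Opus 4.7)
The plan is to recognize that all three conditions are equivalent to acyclicity of a single auxiliary digraph. First I would relabel the rows and columns of $E$ so that, writing $k = |U|$, the set $U$ becomes the diagonal $\{(i,i) : i \in [k]\}$ and $E \subseteq [k] \times [k]$. On the vertex set $[k]$ I define a directed graph $H$ with an edge $a \to b$ exactly when $(a,b) \in E$ and $a \neq b$. I claim each of (1), (2), (3) is equivalent to $H$ being a DAG.

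The equivalence (3) $\Leftrightarrow$ $H$ acyclic is the cleanest: condition (3) asks for a permutation $\pi$ of $[k]$ applied simultaneously to rows and columns (so that $U$ stays on the diagonal) such that every $(a,b) \in E$ with $a \neq b$ satisfies $\pi(a) > \pi(b)$, which is exactly a topological ordering of $H$. For (2) $\Leftrightarrow$ $H$ acyclic, I observe that a size-$k$ transversal of $E$ is a permutation $\sigma \in \Sigma_k$ with $(i,\sigma(i)) \in E$ for all $i$, with $U$ corresponding to $\sigma = \mathrm{id}$; any non-identity such $\sigma$ contains a non-trivial cycle, which is literally a directed cycle in $H$, and conversely any directed cycle in $H$ yields such a non-identity $\sigma$ by acting as the cyclic permutation on the cycle and as the identity elsewhere.

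The substantive content is (1) $\Leftrightarrow$ $H$ acyclic. A directed cycle $a_1 \to a_2 \to \cdots \to a_\ell \to a_1$ in $H$, combined with the diagonal edges of $U$, yields a bipartite cycle of length $2\ell$ alternating between non-$U$ edges $a_i^w a_{i+1}^b$ and $U$-edges $a_i^w a_i^b$, witnessing that $U$ is not special. Conversely, given any cycle $C$ of $G$ with exactly half its edges in $U$, I would argue that $C$ must actually be alternating: since $U$ is a matching, at each vertex of $C$ at most one of the two incident cycle-edges lies in $U$, and a simple edge count ($2\ell$ endpoints of $U$-edges distributed among $2\ell$ vertices, each taking at most one) forces every vertex of $C$ to have exactly one $U$-edge and one non-$U$-edge in $C$. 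Consequently every vertex of $C$ is matched by $U$ and hence lies in a row or column of $E$, and the non-$U$ edges of $C$ project to a directed cycle in $H$. The main obstacle is precisely this counting argument that a ``half-in-$U$'' cycle must alternate; once that is in hand, the three equivalences all boil down to the same dictionary between combinatorial structures and acyclicity of $H$.
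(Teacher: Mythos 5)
Your proof is correct; all three reductions to acyclicity of $H$ go through, including the key counting argument that a cycle with exactly half its edges in a matching must alternate (each of the $\ell$ matching edges contributes two vertex-incidences, and the $2\ell$ cycle vertices can each absorb at most one). The route differs from the paper's only in organization: the paper proves $(1)\Leftrightarrow(2)$ directly by a symmetric-difference argument (two transversals in the same rows and columns give two disjoint matchings on the same vertex set, whose union has as many edges as vertices and hence contains a cycle, necessarily alternating), and only then introduces the digraph $H$ to prove $(2)\Leftrightarrow(3)$; you instead make $H$ the hub and show each condition separately equivalent to $H$ being acyclic. Your hub-and-spoke arrangement is arguably cleaner in that it exposes a single invariant underlying all three conditions, and your $(2)\Leftrightarrow H$-acyclic step (transversals of $E$ as permutations, non-identity permutations containing cycles) is a transparent substitute for the paper's symmetric-difference lemma; the paper's version of $(1)\Leftrightarrow(2)$ has the mild advantage of not requiring the relabeling of $U$ to the diagonal before the combinatorial work begins, and of making explicit the ``swap along the cycle'' bijection between the two transversals. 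Both arguments rest on the same two facts, so nothing essential is gained or lost either way.
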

\begin{proof}
For the first equivalence, let $M$ be the matching corresponding to $U$ in the graph $G$ corresponding to $D$. If $M$ is not special, then there exists a cycle half of whose edges lie in $M$. Replacing those edges in $M$ with the other edges in the cycle gives another matching $M'$, and the corresponding transversal $U'$ lies in the same rows and columns of $D$ as $U$. Conversely, suppose two subsets $U$ and $U'$ are both transversals lying in the same rows and columns. If $M$ and $M'$ are the corresponding matchings, then $(M \backslash M') \cup (M' \backslash M)$ consists of two disjoint matchings on the same set of vertices. It therefore has the same number of edges as vertices, so it contains a cycle. Since both $M$ and $M'$ are matchings, exactly half of the edges in the cycle must lie in each of $M$ and $M'$, so $M$ is not special.

For the second equivalence, suppose $U$ is the unique transversal in $E$ of size $|U|$. We may assume that $U$ consists of the boxes $(i,i)$ for $1 \leq i \leq u$. Consider the directed graph $H$ on $[u]$ with an edge from $i$ to $j$ if $(i,j) \in E\backslash U$. In fact $H$ is acyclic: if $(i_0, i_1, \dots, i_s=i_0)$ formed a cycle in $H$, then we could replace $(i_j, i_j)$ in $U$ by $(i_j, i_{j+1})$ for $0 \leq j < s$ and obtain another transversal in $E$ of size $|U|$. It follows that we can reorder the vertices of $H$ such that there is an edge from $i$ to $j$ only if $i>j$. Applying this reordering to the rows and columns of $E$ gives an equivalent diagram with the desired property. Since the other direction of the equivalence is trivial, this completes the proof.
\end{proof}

We will say that a transversal satisfying the conditions of Proposition~\ref{special-transversal} is \emph{special}. The importance of special transversals comes from the following proposition.

\begin{prop} \label{restrict}
Let $U$ be a special transversal of a diagram $D$. Then the restricted representation $S^D|_{\Sigma_{n-1}}$ contains a subrepresentation isomorphic to $\bigoplus_{x \in U} S^{D \backslash \{x\}}$. In particular,
\[\dim S^D \geq \sum_{x \in U} \dim S^{D \backslash \{x\}}.\]
\end{prop}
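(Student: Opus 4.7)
The plan is to construct, for each $x_i \in U$, an explicit $\Sigma_{n-1}$-equivariant injection $\phi_i\colon S^{D\setminus\{x_i\}} \hookrightarrow S^D|_{\Sigma_{n-1}}$, and then show that the sum of their images is internally direct. As a preliminary step, I use Proposition~\ref{special-transversal}(3) to replace $D$ by an equivalent diagram in which $U = \{x_i = (i,i) : 1\le i\le u\}$ lies along the main diagonal and every other box of the subdiagram $E$ lies strictly below the diagonal, i.e., at a position $(i,j)$ with $i>j$ and $i,j\in[u]$. This staircase form is what will allow the different $\phi_i$'s to be separated from one another. I then label the boxes of $D$ by $[n]$ and fix $\Sigma_{n-1}\subset\Sigma_n$ as the subgroup permuting some chosen $(n-1)$-element subset of labels.

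To define $\phi_i$, I use the factorizations $R(D) = R(D\setminus\{x_i\})\cdot\tau_{x_i,R}$ and $C(D) = C(D\setminus\{x_i\})\cdot\tau_{x_i,C}$, where $\tau_{x_i,R}$ and $\tau_{x_i,C}$ are coset-sum elements built from transpositions moving the label of $x_i$ into the other positions of its row and column, respectively. The generator $C(D\setminus\{x_i\})\,R(D\setminus\{x_i\})$ of $S^{D\setminus\{x_i\}}$ is mapped to an explicit element of $S^D$ built from $C(D)$, $R(D\setminus\{x_i\})$, and (if necessary) a permutation that moves the appropriate label into position $x_i$; the map is then extended $\Sigma_{n-1}$-equivariantly by left multiplication. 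Well-definedness and equivariance are straightforward once the target element is chosen inside the left ideal $\CC[\Sigma_n]\,C(D)R(D)$, and injectivity of each $\phi_i$ follows from standard nondegeneracy properties of Young symmetrizers (in particular, that the image of the generator is a nonzero polytabloid-like element).

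The main obstacle will be proving that the images $\phi_1(S^{D\setminus\{x_1\}}),\ldots,\phi_u(S^{D\setminus\{x_u\}})$ intersect trivially, so that their sum really is direct inside $S^D$. To overcome this I expect to use the triangular arrangement to define, for each $i$, a $\Sigma_{n-1}$-equivariant projection $\pi_i\colon S^D \to S^{D\setminus\{x_i\}}$ such that $\pi_i\circ\phi_i$ is a nonzero scalar multiple of the identity while $\pi_i\circ\phi_j = 0$ for $j\ne i$. The cancellations $\pi_i\circ\phi_j = 0$ are where the condition $(j,i)\notin D$ for $j<i$ (and the hypothesis that $U$ is \emph{special}, not merely a transversal) are indispensable: certain transpositions appearing in $\tau_{x_i,C}$ or $\tau_{x_i,R}$ should annihilate the image of $\phi_j$ because the corresponding swap moves nothing visible in $D\setminus\{x_j\}$ but introduces a column or row antisymmetry in $D$ that must vanish. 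Establishing this cancellation lemma carefully, and organizing the indices so that the resulting system is upper-triangular, is the crux of the proof.
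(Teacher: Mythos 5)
Your overall architecture --- put $D$ in the staircase standard form of Proposition~\ref{special-transversal} and separate the $u$ summands by a triangularity argument keyed to the position of a distinguished label --- matches the paper's in spirit, but you have inverted the direction of the maps, and that inversion creates gaps your sketch does not close. First, a $\Sigma_{n-1}$-map \emph{out of} the cyclic module $S^{D\setminus\{x_i\}}$ is not defined merely by naming an image for the generator $C(D_i)R(D_i)$; you must check that the left annihilator of the generator kills the chosen image, which for a general diagram is not ``straightforward'' (the standard device is to realize the map as right multiplication by an explicit element, as in James--Peel, and that takes real work here). Second, your appeals to nondegeneracy of Young symmetrizers prove at most that $\phi_i$ is nonzero on the generator; since $S^{D_i}$ is in general \emph{reducible}, this does not give injectivity, and likewise $\pi_i\circ\phi_i$ being nonzero does not make it a scalar multiple of the identity --- $\operatorname{End}_{\Sigma_{n-1}}(S^{D_i})$ need not be one-dimensional, so Schur's lemma is unavailable. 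These are exactly the points you defer as ``the crux,'' so as written the proof is incomplete.

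The paper avoids all of this by using only the projections. Letting $T_i$ be the set of tableaux of shape $D$ with the label $n$ in box $(i,i)$, the linear projection $\varphi_i$ of $\CC[\Sigma_n]$ onto the span of $T_i$ is $\Sigma_{n-1}$-equivariant because $\Sigma_{n-1}$ fixes the label $n$. The staircase condition $(i,j)\notin D$ for $i<j\le u$ forces every term of $TC(D)R(D)$ with $T\in T_i$ to carry $n$ in row at least $i$, so the submodules $V_i$ generated by $(T_i\cup\dots\cup T_u)C(D)R(D)$ form a filtration with $V_{i+1}\subseteq\ker\varphi_i$ and $\varphi_i(V_i)$ spanned by $T_iC(D_i)R(D_i)$, hence naturally isomorphic to $S^{D_i}$. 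Complete reducibility of $\Sigma_{n-1}$-modules over $\CC$ then converts the filtration into the desired direct sum of subrepresentations; no injections $\phi_i$ and no internal direct sum of explicit submodules are needed. If you wish to salvage your plan, you must construct each $\phi_i$ as right multiplication by an explicit element of $\CC[\Sigma_n]$ and verify injectivity and the relations $\pi_i\circ\phi_j=\delta_{ij}\cdot(\text{iso})$ by direct computation; the filtration-plus-semisimplicity route is shorter and sidesteps every one of these obligations.
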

\begin{proof}
By Proposition~\ref{special-transversal}, we may assume that $U$ consists of boxes $(i,i)$ for $1 \leq i \leq u$ and that $(i,j) \notin D$ for $1 \leq i < j \leq u$. Write $D_i=D \backslash \{(i,i)\}$.

Recall that we interpret the basis elements of $\CC[\Sigma_n]$ as tableaux of shape $D$ with entries $1, \dots, n$. For $1 \leq i \leq u$, let $T_i$ be the set of tableaux of shape $D$ such that $n$ appears in box $(i,i)$. By our choice of $U$, every term of $TC(D)R(D)$ for $T \in T_i$ contains the entry $n$ in row at least $i$.

Let $\varphi_i$ be the linear map on $\CC[\Sigma_n]$ defined on a tableau $T$ of shape $D$ by $\varphi_i(T)=T$ if $T \in T_i$ and 0 otherwise. Since the action of $\Sigma_{n-1}$ on tableaux in $\CC[\Sigma_n]$ does not change the position of $n$, it follows that $\varphi_i$ is a $\Sigma_{n-1}$-homomorphism.

Let $V_i$ be the subrepresentation of $S^D|_{\Sigma_{n-1}}$ generated by $(T_i \cup T_{i+1} \cup \dots \cup T_u)C(D)R(D)$. By above, we have that for $1 \leq i \leq u$, $V_{i+1} \subset \ker \varphi_i$. We then have that $\varphi_i(V_i)$ is generated by $\varphi_i(T_i C(D)R(D))=T_iC(D_i)R(D_i)$, so it is naturally isomorphic to $S^{D_i}$. Therefore, the successive quotients of the filtration $0 \subset V_u \subset V_{u-1} \subset \dots \subset V_1$ contain $S^{D_u}$, $S^{D_{u-1}}$, \dots, $S^{D_1}$ in succession. The result follows easily.
\end{proof}

Comparing Proposition~\ref{restrict} with Proposition~\ref{almost-perfect}, we obtain the following.
\begin{prop} \label{hard}
For all $G \in \mathcal F$, $\dim S^G \geq V(G)$.
\end{prop}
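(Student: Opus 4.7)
The plan is to prove Proposition~\ref{hard} by induction on the number of edges $n$ of the forest $G$, combining the recurrence of Proposition~\ref{almost-perfect} for the right-hand side with the restriction inequality of Proposition~\ref{restrict} for the left-hand side. The base case $n=0$ or $n=1$ is immediate since both quantities equal $1$.

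For the inductive step, I would pick an almost perfect matching $M$ of $G$, which exists by the proposition proved just before Proposition~\ref{standard}. The key observation is that $M$ simultaneously plays two roles. On the one hand, Proposition~\ref{almost-perfect} applied to $M$ gives
\[
V(G) = \sum_{e \in M} V(G \setminus e).
\]
On the other hand, since $G$ is a forest it contains no cycles, so $M$ is automatically a special matching; translating to the diagram $D(G)$, the transversal $U$ corresponding to $M$ is a special transversal. Then Proposition~\ref{restrict} applied to $U$ yields
\[
\dim S^G \geq \sum_{x \in U} \dim S^{D(G) \setminus \{x\}} = \sum_{e \in M} \dim S^{G \setminus e}.
\]
By the inductive hypothesis, $\dim S^{G \setminus e} \geq V(G \setminus e)$ for each $e \in M$, and chaining these inequalities gives $\dim S^G \geq V(G)$.

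The argument is essentially a direct combination of the two key propositions, so the main conceptual content has already been done in the preceding material; the one point requiring care is verifying that the same matching $M$ can be used on both sides, i.e.\ that an almost perfect matching of a forest always yields a special transversal in the associated diagram. This follows painlessly from the remark that any matching in a bipartite graph with no cycles is trivially special, together with the equivalence in Proposition~\ref{special-transversal} between special matchings and special transversals. No further delicate combinatorics or representation-theoretic computation is needed beyond what has already been developed.
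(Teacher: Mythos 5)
Your proposal is correct and follows exactly the same route as the paper: induct on the number of edges, use that an almost perfect matching of a forest is automatically special, apply Proposition~\ref{restrict} to the corresponding transversal, and chain with the induction hypothesis and Proposition~\ref{almost-perfect}. No substantive differences.
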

\begin{proof}
We induct on the number of edges in $G$. Let $M$ be an almost perfect matching of $G$. Then since $G$ is a forest, $M$ is special, so by Proposition~\ref{restrict}, the induction hypothesis, and Proposition~\ref{almost-perfect},
\[\dim S^G \geq \sum_{e \in M} \dim S^{G \backslash e} \geq \sum_{e \in M} V(G \backslash e) = V(G).\qedhere\]
\end{proof}

Armed with these facts, it is a simple matter to deduce Theorem~\ref{main-theorem}.

\begin{proof}[Proof of Theorem~\ref{main-theorem}]
We wish to show that for any $G \in \mathcal F$, $\dim S^G=V(G)$. It suffices to show that $\dim S^G$ satisfies the three conditions given in Proposition~\ref{V-unique}. We induct on $n$, the number of edges of $G$, noting as before that if we prove the conditions for $n \leq k$, then $\dim S^G=V(G)$ for all graphs with at most $k$ edges. Let us then assume that we have proven the claim for all graphs with fewer than $n$ edges.

The first condition states that if $T_n$ is the tree with $n$ edges and white center vertex, then $\dim S^{T_n}=1$. But the diagram of $T_n$ is a row of $n$ boxes, so its Specht module is the trivial representation, which has dimension 1. (This also proves the base case when $n=1$.)

The second condition states that if $G_1$ and $G_2$ have $m$ and $n-m$ edges, respectively, then the disjoint union $G_1+G_2$ satisfies $\dim S^{G_1+G_2} = \binom{n}{m} \cdot \dim S^{G_1} \cdot \dim S^{G_2}$. But as diagrams, $G_1+G_2 = G_1 \oplus G_2$, so
\begin{align*}
\dim S^{G_1+G_2}&=\dim \operatorname{Ind}_{\Sigma_m \times \Sigma_{n-m}}^{\Sigma_{n}}(S^{G_1} \otimes S^{G_2})\\
&= [\Sigma_{n} : \Sigma_m \times \Sigma_{n-m}] \cdot \dim (S^{G_1} \otimes S^{G_2})\\
&= \binom{n}{m} \cdot \dim S^{G_1} \cdot \dim S^{G_2}.
\end{align*}
In particular, this implies by induction that $\dim S^G=V(G)$ for any disconnected graph $G$ with $n$ edges.

The third and final condition states that $\dim S^G$ satisfies the leaf recurrence. In other words, we need to show that for $G, G_1, G_2 \in \mathcal F$ related as in Proposition~\ref{leaf}, $\dim S^G=\dim S^{G_1}+\dim S^{G_2}$. Since $G$ is disconnected, we have from above that $\dim S^G=V(G)$. But by Propositions~\ref{easy}, \ref{hard}, and \ref{leaf}, we have
\[V(G) = \dim S^G \geq \dim S^{G_1}+\dim S^{G_2} \geq V(G_1)+V(G_2)=V(G).\]
Therefore we must have equality everywhere. The result follows immediately.
\end{proof}

Note that equality must hold everywhere in the proof above, and therefore we must also have equality in Propositions~\ref{easy} and \ref{hard}. This gives us the following two corollaries.

\begin{cor} \label{exact-split}
Let $G, G_1, G_2 \in \mathcal F$ be related as in Proposition~\ref{leaf} (the leaf recurrence). Then
\[S^G \cong S^{G_1} \oplus S^{G_2}.\]
\end{cor}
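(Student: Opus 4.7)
The plan is to notice that the work has essentially already been done: Proposition~\ref{easy} does not merely give an inequality of dimensions but produces, via Lemma~\ref{JP} applied to the splitting of the diagram of $G$ into $G_1$ and $G_2$, an injective $\CC[\Sigma_n]$-module homomorphism
\[S^{G_1} \oplus S^{G_2} \hookrightarrow S^G.\]
So the corollary reduces to checking that this injection is surjective, i.e.\ that the two sides have the same dimension.

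For the dimension count, I would invoke Theorem~\ref{main-theorem} applied to $G$, $G_1$, and $G_2$ (each of which lies in $\mathcal F$) together with the leaf recurrence (Proposition~\ref{leaf}):
\[\dim S^G = V(G) = V(G_1) + V(G_2) = \dim S^{G_1} + \dim S^{G_2}.\]
Since an injection between finite-dimensional $\CC[\Sigma_n]$-modules of equal dimension is automatically an isomorphism, the inclusion above is an isomorphism of $\Sigma_n$-representations, as desired.

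In fact, this conclusion can also be read off directly from the proof of Theorem~\ref{main-theorem}: the chain $V(G) = \dim S^G \geq \dim S^{G_1} + \dim S^{G_2} \geq V(G_1) + V(G_2) = V(G)$ forces equality throughout, and equality in Proposition~\ref{easy} means exactly that the subrepresentation it constructs exhausts $S^G$. There is no real obstacle to overcome here; the only point requiring care is that Proposition~\ref{easy} must be read as supplying an actual inclusion of representations (not merely an inequality of dimensions or of characters), so that equality of dimensions upgrades to an isomorphism of $\CC[\Sigma_n]$-modules rather than just an equality in the representation ring.
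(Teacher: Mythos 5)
Your proposal is correct and matches the paper's own argument: the paper deduces this corollary precisely by observing that equality must hold throughout the chain $V(G)=\dim S^G\geq \dim S^{G_1}+\dim S^{G_2}\geq V(G_1)+V(G_2)=V(G)$ in the proof of Theorem~\ref{main-theorem}, so the subrepresentation supplied by Proposition~\ref{easy} (via Lemma~\ref{JP} and semisimplicity of $\CC[\Sigma_n]$) is all of $S^G$. Your added caveat that Proposition~\ref{easy} must be read as an actual inclusion of modules is exactly the right point of care, and is how the paper intends it.
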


\begin{cor} \label{corners}
Let $G \in \mathcal F$, and let $M$ be an almost perfect matching of $G$. Then
\[S^G|_{\Sigma_{n-1}} \cong \bigoplus_{e \in M} S^{G\backslash e}.\]
\end{cor}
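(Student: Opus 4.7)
The plan is to combine Proposition~\ref{restrict} with a dimension count using Theorem~\ref{main-theorem} and Proposition~\ref{almost-perfect}, essentially capitalizing on the fact that all the nontrivial work has already been done in the proof of the main theorem.

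First I would observe that the almost perfect matching $M$ of the forest $G$ corresponds, via the diagram $D(G)$, to a transversal $U$. Since $G$ is a forest it contains no cycles, so $M$ is a special matching, and hence $U$ is a special transversal by Proposition~\ref{special-transversal}. Applying Proposition~\ref{restrict} to $U$ therefore produces a $\Sigma_{n-1}$-equivariant injection
\[\bigoplus_{e \in M} S^{G \backslash e} \hookrightarrow S^G|_{\Sigma_{n-1}}.\]

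Next I would perform the dimension count. By Theorem~\ref{main-theorem}, $\dim S^G = V(G)$ and $\dim S^{G\backslash e} = V(G\backslash e)$ for each $e \in M$ (noting that $G\backslash e \in \mathcal F$). Proposition~\ref{almost-perfect} then gives
\[\dim S^G \;=\; V(G) \;=\; \sum_{e \in M} V(G\backslash e) \;=\; \sum_{e \in M} \dim S^{G\backslash e}.\]
Thus the subrepresentation constructed above has the same dimension as $S^G|_{\Sigma_{n-1}}$, so the injection is an isomorphism. This is precisely the statement of the corollary.

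There is really no serious obstacle here; the remark preceding the corollary already signals that equality must hold in Proposition~\ref{hard}, and the corollary simply makes the resulting $\Sigma_{n-1}$-module isomorphism explicit. The only minor point to verify is that Proposition~\ref{restrict} applies, which requires $M$ to be special — and this is immediate since $G$ has no cycles at all.
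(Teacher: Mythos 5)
Your proposal is correct and is essentially the paper's own argument: the paper deduces the corollary by noting that equality must hold in the chain of inequalities in Proposition~\ref{hard} once Theorem~\ref{main-theorem} is known, which is exactly your dimension count, combined with the containment from Proposition~\ref{restrict}. Nothing further is needed.
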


Recall that for a partition $\lambda$, the restriction of the Specht module $S^\lambda$ from $\Sigma_n$ to $\Sigma_{n-1}$ can be described as a direct sum of $S^\mu$, where $\mu$ ranges over all partitions that can be obtained from $\lambda$ by removing a corner box from the Young diagram of $\lambda$. Therefore Corollary~\ref{corners} shows that the edges in an almost perfect matching of a forest are analogous to corner boxes of a Young diagram.


\section{Symmetric functions}

In this section, we will associate to each forest $G$ a symmetric function $s_G$. We will also show that the map sending $G$ to $s_G$ is, in a sense, the most general or universal function satisfying the leaf recurrence.

Recall that to a Young diagram $\lambda$, we may associate a symmetric function
\[s_{\lambda}(x_1, x_2, \dots)=\sum_T x^T,\]
where $T$ ranges over all semistandard tableaux of shape $\lambda$, and $x^T=x_1^{\alpha_1}x_2^{\alpha_2}\cdots$ if $T$ contains $\alpha_i$ occurrences of $i$. The $s_\lambda$ form an additive integer basis for the ring $\Lambda_\ZZ$ of symmetric functions over the $x_i$ with coefficients in $\ZZ$. Moreover, the multiplicative structure constants for the $s_\lambda$ are given by the Littlewood-Richardson coefficients.

Due to the importance of symmetric functions to the representation theory of the symmetric group, we make the following definition.

\begin{defn}
Given a diagram $D$, the \emph{Schur function} $s_D \in \Lambda$ is given by
\[s_D = \sum c_\lambda^D s_\lambda.\]
\end{defn}

This can also be defined as the image of the character of $S^D$ under the Frobenius characteristic map. When $D$ is a skew Young diagram this definition coincides with the usual definition of a skew Schur function in terms of semistandard tableaux because the $c_\lambda^D$ are again Littlewood-Richardson coefficients. 

In the case when $D$ corresponds to a forest, we can now show the following proposition.

\begin{prop} \label{s-unique}
There is a unique function $s \colon \mathcal F \to \Lambda_\ZZ$ that satisfies the following properties:
\begin{enumerate}
\item For the star $T_n$ with $n$ edges and white center vertex, $s(T_n)=h_n$, the $n$th complete homogeneous symmetric function.
\item For $G_1, G_2 \in \mathcal F$,  $s(G_1+G_2)=s(G_1)s(G_2)$.
\item The function $s$ satisfies the leaf recurrence.
\end{enumerate}
In this case, $s(G)=s_G$, the Schur function associated to $G$.
\end{prop}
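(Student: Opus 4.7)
The plan is to mirror the structure of Proposition~\ref{V-unique}: first prove uniqueness by the same induction as was used there, then verify that $G \mapsto s_G$ satisfies each of the three conditions.

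For uniqueness, the argument is formally identical to the one in Proposition~\ref{V-unique}, only with values taken in $\Lambda_{\ZZ}$ rather than in $\RR$. Given a forest $G$ with $n$ edges, property (2) lets us reduce to the connected case. Root the tree at a white vertex $v_0$ and induct on the sum $s$ of distances from vertices to $v_0$; the base case $s = n$ forces $G = T_n$, handled by (1). Otherwise pick a leaf $v_1'$ whose neighbor $v_1$ differs from $v_0$, apply property (3) to express $s(G)$ as the difference of $s(H)$ (with $H$ disconnected, hence computable from smaller forests via (2)) and $s(G')$ (with $s(G') < s(G)$), completing the induction. I would simply cite the argument in Proposition~\ref{V-unique} rather than rewrite it.

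For existence, I need to verify (1), (2), and (3) for $s_G$. Condition (1) is immediate: the diagram $D(T_n)$ for the star with white center is a single row of $n$ boxes, so $S^{T_n}$ is the trivial representation of $\Sigma_n$, whose Frobenius characteristic is $h_n$. Condition (2) follows from the induction formula $S^{G_1+G_2} = \operatorname{Ind}_{\Sigma_m \times \Sigma_{n-m}}^{\Sigma_n}(S^{G_1} \otimes S^{G_2})$ noted in Section~3, together with the standard fact that the Frobenius characteristic map carries induction from Young subgroups to multiplication in $\Lambda_{\ZZ}$, so $s_{G_1+G_2} = s_{G_1} s_{G_2}$.

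The only potentially nontrivial step, property (3), is essentially free: Corollary~\ref{exact-split} already gives the isomorphism $S^G \cong S^{G_1} \oplus S^{G_2}$ whenever $G, G_1, G_2$ are related by the leaf recurrence, and additivity of the Frobenius characteristic on direct sums yields $s_G = s_{G_1} + s_{G_2}$ at once. Thus what would ordinarily be the main obstacle has already been absorbed into Corollary~\ref{exact-split}, which itself was obtained by squeezing the dimension inequalities of Propositions~\ref{easy} and \ref{hard} into equalities during the proof of Theorem~\ref{main-theorem}. Apart from this, the proof is routine bookkeeping.
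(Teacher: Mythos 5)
Your proposal is correct and follows the same route as the paper: uniqueness by citing the induction of Proposition~\ref{V-unique} (valid since $\Lambda_\ZZ$ is an abelian group under addition, so the subtraction step goes through), condition (1) from $s_{(n)}=h_n$, condition (2) from the compatibility of the Frobenius characteristic with induction from Young subgroups, and condition (3) from Corollary~\ref{exact-split}. The paper's own proof is just a terser version of exactly this argument.
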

\begin{proof}
The proof of uniqueness is identical to that of Proposition~\ref{V-unique}, so it suffices to check that the map $G \mapsto s_G$ satisfies these properties. The first follows because the Schur function corresponding to the partition $(n)$ is $h_n$. The second follows because the analogous statement holds for partitions. The third follows immediately from Corollary~\ref{exact-split}.
\end{proof}

Indeed, we can even say something a little bit stronger: the map $s$ is universal in the following sense.
\begin{thm}~\label{general-leaf}
Let $R$ be a commutative ring with unit, and suppose that $\bar f \colon \{T_n \mid n>0\} \to R$ is any function. Then $\bar f$ can be uniquely extended to a function $f \colon \mathcal F \to R$ that satisfies:
\begin{enumerate}
\item For $G_1, G_2 \in \mathcal F$,  $f(G_1+G_2)=f(G_1)f(G_2)$.
\item The function $f$ satisfies the leaf recurrence.
\end{enumerate}
Moreover, $f$ factors uniquely as $\varphi \circ s$, where $s\colon G \mapsto s_G$ and $\varphi\colon \Lambda_\ZZ \to R$ is a ring homomorphism.
\end{thm}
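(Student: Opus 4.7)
The plan is to leverage Proposition~\ref{s-unique} together with the classical fact that $\Lambda_\ZZ = \ZZ[h_1, h_2, \dots]$ is a polynomial ring freely generated by the complete homogeneous symmetric functions. Once this is in hand, both the existence and uniqueness of $f$ and $\varphi$ become essentially formal.

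First, I would dispense with uniqueness of $f$. The inductive argument in the proof of Proposition~\ref{V-unique} never used anything specific about $V$: given only that $f$ is multiplicative on disjoint unions and satisfies the leaf recurrence, it expresses $f(G)$ in terms of values on stars. Concretely, one inducts on $(n, s)$ where $n$ is the number of edges and $s$ is the sum of distances from vertices to a chosen white root; condition (1) reduces to connected trees, and applying the leaf recurrence to a leaf whose parent is not the root produces two forests of strictly smaller invariant, one of which is disconnected. Hence $f$ is determined by its values on the stars $T_n$, i.e.\ by $\bar f$.

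For existence, I would construct $f$ directly as $\varphi \circ s$. Since $\{h_n\}_{n \geq 1}$ is an algebraically independent set of generators of $\Lambda_\ZZ$, there is a unique ring homomorphism $\varphi \colon \Lambda_\ZZ \to R$ with $\varphi(h_n) = \bar f(T_n)$ for all $n \geq 1$. Set $f = \varphi \circ s$. Then Proposition~\ref{s-unique} gives the three properties: $f(T_n) = \varphi(s_{T_n}) = \varphi(h_n) = \bar f(T_n)$, so $f$ extends $\bar f$; multiplicativity of $s$ on disjoint unions combined with the ring-homomorphism property of $\varphi$ gives $f(G_1 + G_2) = \varphi(s_{G_1}) \varphi(s_{G_2}) = f(G_1) f(G_2)$; and the leaf recurrence for $s$ together with additivity of $\varphi$ yields the leaf recurrence for $f$.

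Finally, for uniqueness of the factorization $f = \varphi \circ s$, any such $\varphi$ must satisfy $\varphi(h_n) = \varphi(s_{T_n}) = f(T_n) = \bar f(T_n)$, and a homomorphism out of the polynomial ring $\ZZ[h_1, h_2, \dots]$ is determined by its values on the $h_n$. Thus $\varphi$ is forced to be the homomorphism constructed above. The only nontrivial input to this whole argument is that $\Lambda_\ZZ$ is polynomial in the $h_n$ — a standard fact — so there is no real obstacle; the content of the theorem is entirely packaged into Corollary~\ref{exact-split} (which underwrites Proposition~\ref{s-unique}) and the freeness of $\Lambda_\ZZ$ on $\{h_n\}$.
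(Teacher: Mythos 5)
Your proposal is correct and follows essentially the same route as the paper: define $\varphi$ on the free polynomial generators $h_n$ of $\Lambda_\ZZ$ by $\varphi(h_n)=\bar f(T_n)$, set $f=\varphi\circ s$, and derive existence from Proposition~\ref{s-unique} and uniqueness from the same induction as in Proposition~\ref{V-unique}. You spell out the steps the paper leaves as ``the claim follows easily,'' but the argument is the same.
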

\begin{proof}
Recall that the $h_n$ are algebraically independent, so that $\Lambda_\ZZ=\ZZ[h_1, h_2, \dots]$. Then setting $\varphi(h_n)=\bar f(T_n)$ defines $\varphi$ uniquely, and the claim follows easily from Proposition~\ref{s-unique}.
\end{proof}

For example, recall that the map $G \mapsto \vol M_G$ satisfies the conditions of Theorem~\ref{general-leaf}. It follows that this map corresponds to a ring homomorphism $\Lambda_\ZZ \to \QQ$. Indeed, this is the map known as \emph{exponential specialization}, which maps a homogeneous function $f \in \Lambda_\ZZ$ of degree $n$ to the coefficient $\frac 1{n!}[x_1x_2\dots x_n]f$. (See, for instance, \cite{EC2}.)

As another example, we may take the map $\Lambda_\ZZ \to \QQ[N]$ that sends a symmetric function $f(x_1, x_2, \dots)$ to its evaluation when $N$ of the variables equal 1 and the rest are 0, sometimes called its \emph{principal specialization}. We find that this has the following combinatorial interpretation.

\begin{prop} \label{m-g}
Let $G \in \mathcal F$. For any nonnegative integer $N$, let $m_G(N)$ be the number of nonnegative integer edge labelings $w\colon E \to \ZZ$ such that for all $v \in V$,
\[\sum_{e \ni v} w(e) \leq \begin{cases}N-1,& \mbox{if $v$ is white, and}\\N-\deg(v),& \mbox{if $v$ is black.}\end{cases}\]

Then $m_G(N)$ is a polynomial in $N$, the map $G \mapsto m_G$ satisfies the conditions of Theorem~\ref{general-leaf}, and $m_G(N)=s_G(\underbrace{1,1,\dots,1}_{N})$.
\end{prop}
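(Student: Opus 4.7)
The plan is to invoke Theorem~\ref{general-leaf}. Since $h_n(1^N) = \binom{N+n-1}{n}$, the principal specialization $\varphi_N\colon\Lambda_\ZZ \to \ZZ$, $f \mapsto f(1^N)$, is precisely the ring homomorphism corresponding via Theorem~\ref{general-leaf} to the assignment $\bar f(T_n)=\binom{N+n-1}{n}$. Thus $G \mapsto s_G(1^N)$ is the unique extension of $\bar f$ to $\mathcal F$ that is multiplicative on disjoint unions and satisfies the leaf recurrence. So it suffices to verify that $m_G$ enjoys these same three properties; polynomiality in $N$ and the identity $m_G(N) = s_G(1^N)$ will then both follow by the usual induction on the number of edges.

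For the star $T_n$, the only binding constraint is $w_1 + \cdots + w_n \leq N - 1$ at the white center (the leaf constraints $w_i \leq N - 1$ being redundant), so $m_{T_n}(N) = \binom{N + n - 1}{n}$ as required. Multiplicativity $m_{G_1+G_2}(N) = m_{G_1}(N)\,m_{G_2}(N)$ is immediate from the fact that the vertex constraints on $G_1$ and $G_2$ decouple.

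The leaf recurrence is the substantive step, and I would prove it by a lattice-point bijection analogous to the volume argument in Proposition~\ref{leaf}. Assume $v_1$ is white and $v_2$ is black (the other case being symmetric), and write $A_i$ for the sum of the weights at $v_i$ coming from the edges of $H$. Partition labelings of $G$ by whether $w_1 \geq w_2$ or $w_1 < w_2$. On the first part, the substitution $z_1 := w_1 - w_2$, $z_2 := w_2$ (all other weights unchanged) should biject onto labelings of $G_1$: the $G_1$-constraint at the white $v_1$ reads $z_1 + z_2 + A_1 \leq N - 1$, which under the substitution recovers $G$'s constraint at $v_1$; the $G_1$-constraint at the black $v_2$ reads $z_2 + A_2 \leq N - \deg_{G_1}(v_2)$ and matches $G$'s constraint at $v_2$ exactly, since $\deg_{G_1}(v_2) = \deg_G(v_2)$; and both leaf constraints of $G$ (at $v_1'$ and $v_2'$) are implied by the others. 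On the second part, the asymmetric substitution $\tilde z_1 := w_2 - w_1 - 1$, $\tilde z_2 := w_1$ bijects onto labelings of $G_2$: the strict inequality $w_1 < w_2$ exactly guarantees $\tilde z_1 \geq 0$, and the $-1$ shift is forced because $v_2$ in $G_2$ has one higher degree as a black vertex than in $G$, tightening its constraint by precisely one unit.

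The main obstacle is this last bijection: in Proposition~\ref{leaf} the overlap $w_1 = w_2$ is a measure-zero nuisance, but in the integer setting it becomes a genuine discrepancy that must be absorbed by a unit shift, and getting this shift to align correctly with the white/black asymmetry of the $m_G$ constraints is the one delicate point (in the color-swapped case the $-1$ moves to the other side). Once the leaf recurrence is established, Theorem~\ref{general-leaf} yields $m_G(N) = s_G(1^N)$ for every nonnegative integer $N$, and since both sides are polynomials in $N$ they agree as polynomials.
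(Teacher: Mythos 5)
Your proposal is correct and follows essentially the same route as the paper: compute $m_{T_n}(N)=\binom{N+n-1}{n}=h_n(1^N)$, note multiplicativity over disjoint unions, and establish the leaf recurrence via exactly the integer substitutions $z_1=w_1-w_2$, $z_2=w_2$ on the part $w_1\geq w_2$ and $\tilde z_1=w_2-w_1-1$, $\tilde z_2=w_1$ on the part $w_1<w_2$, then conclude by Theorem~\ref{general-leaf}. You in fact supply more detail than the paper (which leaves the constraint-checking as "simple to check"), and your verification of the $-1$ shift against the degree change at the black vertex $v_2$ is the right justification.
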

\begin{proof}
First suppose $G=T_n$. Then $m_G(N)$ is the number of $n$-tuples of nonnegative integers summing to at most $N-1$, and a simple counting argument shows that this is $\binom{n+N-1}{n}=h_n(\underbrace{1, \dots, 1}_{N}).$

It suffices to check that the map $G \mapsto m_G$ satisfies the conditions of Theorem~\ref{general-leaf}. The first condition is obvious. For the second, we mimic the proof of Proposition~\ref{leaf}. Let $G$, $G_1$, $G_2$, $v_1$, $v_1'$, $v_2$, and $v_2'$ be as before, and let us assume without loss of generality that $v_1$ is white and $v_2$ is black. Let $w$ be a suitable edge weighting for $G$ with $w_i=w(\overline{v_iv_i'})$. If $w_1 \geq w_2$, then let $z$ be the weighting of $G_1$ such that $z(\overline{v_1v_2})=w_2$, $z(\overline{v_1v_1'})=w_1-w_2$, and $z(e)=w(e)$ for all other edges; if $w_1 < w_2$, let $z'$ be the weighting of $G_2$ such that $z'(\overline{v_1v_2})=w_1$, $z'(\overline{v_2v_2'})=w_2-w_1-1$, and $z'(e)=w(e)$ for all other edges. It is simple to check that this gives a bijection between suitable edge weightings for $G$ and suitable edge weightings for either $G_1$ or $G_2$. The result follows.
\end{proof}

The principal specialization gives a polynomial that counts lattice points in a way that is similar to the Ehrhart polynomial of the matching polytope. We shall see a representation-theoretic interpretation of $m_G(N)$ in the next section.


\section{Schur modules}

Just as $V(G)$ has an interpretation in terms of the dimension of a representation of the symmetric group, $m_G(N)$ has an interpretation as a dimension of a representation of the general linear group.

Given a diagram $D$ with $n$ boxes and at most $N$ rows, we define a $GL(N)(=GL(N, \CC))$-module as follows. Let $V=\CC^N$ be the defining representation of $GL(N)$. Then, defining $C_D$, $R_D$, $C(D)$, and $R(D)$ as before, we define the \emph{Schur module} $\mathscr S^D$ by
\[\mathscr S^D = V^{\otimes n} C(D)R(D),\]
where we associate the copies of $V$ in the tensor power to the boxes of the diagram $D$, and $C(D)$ and $R(D)$ act in the obvious way. Note that even if $D$ has more than $N$ rows, we can still make this definition, except that we may find $S^D=0$.

Let $e_1, \dots, e_N$ be the standard basis of $V$. Then we may identify a tableau of shape $D$ with entries at most $N$ with the corresponding tensor product of standard basis elements of $V$, an element of $V^{\otimes n}$. Then $\mathscr S^D$ is spanned by $TC(D)R(D)$, where $T$ ranges over all tableaux of shape $D$ with entries at most $N$.

When $D$ is the Young diagram of a partition $\lambda$ with at most $N$ parts, $\mathscr S^\lambda$ is the irreducible polynomial representation of $GL(N)$ with highest weight $\lambda$. In general, $\mathscr S^D$ is a polynomial representation of $GL(N)$ of degree $n$, and it follows from Schur-Weyl duality \cite{FultonHarris} that
\[\mathscr S^D \cong \bigoplus (\mathscr S^\lambda)^{\oplus c_\lambda^D},\]
where the coefficients $c_\lambda^D$ are the same as those encountered in the decomposition of the Specht module $S^D$.

Recall that the \emph{character} of $\mathscr S^D$ is the trace of the action of the diagonal matrix $\operatorname{diag}(x_1, \dots, x_N)$ on $\mathscr S^D$ as a polynomial in the $x_i$. Then it follows from the fact that $\ch (\mathscr S^\lambda)=s_\lambda$ and the definition of $s_D$ that
\[\ch(\mathscr S^D)=s_D(x_1, \dots, x_N).\]
We therefore find the following.

\begin{prop}
Let $G \in \mathcal F$. Then $m_G(N)=\dim\mathscr S^G$.
\end{prop}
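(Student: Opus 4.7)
The proof is essentially a three-line chain putting together facts already established in the paper, so my plan is to explain why each link holds and then compose them.

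First I would recall that for any finite-dimensional $GL(N)$-module $W$, the dimension is obtained by evaluating the character at the identity matrix, which is the diagonal matrix with all entries equal to $1$. In other words, $\dim W = \ch(W)(1,1,\ldots,1)$ ($N$ ones). Applied to $W=\mathscr S^G$, this gives
\[\dim \mathscr S^G = \ch(\mathscr S^G)(1,1,\ldots,1).\]

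Next I would invoke the identity $\ch(\mathscr S^D)=s_D(x_1,\ldots,x_N)$ stated in the paragraph preceding the proposition (which in turn follows from the decomposition of $\mathscr S^D$ into irreducibles together with $\ch(\mathscr S^\lambda)=s_\lambda$). Specializing $D$ to be the diagram of the forest $G$ yields
\[\dim \mathscr S^G = s_G(1,1,\ldots,1).\]
Finally, Proposition~\ref{m-g} already identifies the principal specialization $s_G(1,1,\ldots,1)$ (with $N$ ones) as the polynomial $m_G(N)$, so combining the equalities gives $m_G(N)=\dim\mathscr S^G$, as desired.

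Since every ingredient has been established, there is no real obstacle here; the only thing one must be careful about is that the claim $\ch(\mathscr S^G)=s_G(x_1,\ldots,x_N)$ is justified by the Schur-Weyl-type decomposition $\mathscr S^G\cong\bigoplus (\mathscr S^\lambda)^{\oplus c_\lambda^G}$ using the same coefficients $c_\lambda^G$ that appear in $s_G=\sum c_\lambda^G s_\lambda$, which is exactly what makes the two specializations compatible. The proof I would write is therefore a one-sentence chain of equalities with a brief reminder that the dimension of a polynomial $GL(N)$-module is its character evaluated at the all-ones vector.
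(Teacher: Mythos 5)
Your argument is correct and is essentially identical to the paper's proof, which likewise observes that $\dim\mathscr S^G$ is the trace of the identity matrix (hence $\ch(\mathscr S^G)$ evaluated at all ones, i.e.\ $s_G(1,\dots,1)$) and then applies Proposition~\ref{m-g}. No issues.
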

\begin{proof}
Follows from Proposition~\ref{m-g}, since $\dim \mathscr S^G$ is the trace of the identity matrix in $GL(N)$.
\end{proof}

Recall that for Specht modules, Corollary~\ref{corners} gives a description of the restriction of $S^G$ from $\Sigma_n$ to $\Sigma_{n-1}$ by identifying a set of edges that act analogously to corner boxes of a Young diagram. We shall now obtain a similar result for the restriction of $\mathscr S^G$ from $GL(N)$ to $GL(N-1)$ (where $GL(N-1) \subset GL(N)$ acts in the first $N-1$ dimensions and as the identity on the last). This amounts to finding certain subsets of edges that act analogously to horizontal strips of a Young diagram. 

Let $D$ be the diagram of a forest, and fix once and for all a transversal corresponding to an almost perfect matching $U$. Let us assume, as per Proposition~\ref{special-transversal}, that $U$ consists of the boxes $(i,i)$ for $1 \leq i \leq u$ and that $(i,j) \notin D$ for $1 \leq i < j \leq u$ (so that the set $U$ is also equipped with an ordering). We will say that such a diagram is in \emph{standard form}.

\begin{lemma}
Let $D$ be the diagram of a forest with transversal $U$ as above. Then the diagram $D'$ obtained by removing column $u$ from $D$ has a transversal $U'$ corresponding to an almost perfect matching that contains a box from each of the first $u-1$ columns of $D'$ (and possibly others).
\end{lemma}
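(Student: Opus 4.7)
My plan is to construct $U'$ by starting from $M_0 := U\setminus\{(u,u)\}$ and, if necessary, repairing it via alternating-path swaps. Since removing $b_u$ can only decrease vertex degrees, every non-leaf of $G'$ was already a non-leaf of $G$, so lies in $\{w_1,\dots,w_u,b_1,\dots,b_{u-1}\}$. Consequently the only non-leaf of $G'$ that $M_0$ could fail to cover is $w_u$ itself, and by the standard-form hypothesis the $G'$-neighbors of $w_u$ are limited to (i) transversal vertices $b_j$ with $j<u$ (from below-diagonal boxes in row $u$) or (ii) leaves of $G$ lying outside the transversal (boxes in row $u$ and columns $>u$).

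The easy cases are when either $w_u$ is not a non-leaf of $G'$ (so $M_0$ already covers every non-leaf, with at most one new isolated edge $(w_u,b^*)$ to adjoin, which occurs precisely when $\deg_G(w_u)=2$ and $w_u$'s surviving neighbor is itself a leaf), or when $w_u$ is a non-leaf with a leaf $G'$-neighbor $b^*$ outside the transversal (in which case $U':=M_0\cup\{(w_u,b^*)\}$ works). The substantive case is that $w_u$ is a non-leaf of $G'$ all of whose $G'$-neighbors lie in $\{b_1,\dots,b_{u-1}\}$. I pick any such $b_{j_1}$, replace $(w_{j_1},b_{j_1})$ in the matching by $(w_u,b_{j_1})$, and recurse on the displaced $w_{j_1}$: if $w_{j_1}$ is a leaf of $G'$, stop; otherwise the standard form applied to row $j_1$ forces its remaining $G'$-neighbors to be either some $b_{j_2}$ with $j_2<j_1$ (continue the swap) or a leaf $b^*$ outside the transversal (match $(w_{j_1},b^*)$ and stop). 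The indices strictly decrease $u>j_1>j_2>\cdots$, so the process terminates in at most $u$ steps.

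The main obstacle is to verify that this resulting $U'$ is genuinely an almost perfect matching of $G'$ still containing a box in each of the first $u-1$ columns. Coverage of each $b_i$ with $i\le u-1$ is immediate: if $i\notin\{j_1,\dots,j_k\}$ then $(w_i,b_i)\in M_0$ survives in $U'$, while if $i=j_s$ then $b_{j_s}$ is rematched to $w_{j_{s-1}}$ (with the convention $w_{j_0}:=w_u$). The only non-leaf of $G'$ that could potentially be left uncovered is the final displaced vertex $w_{j_k}$, but termination forces $w_{j_k}$ to be a leaf of $G'$; moreover its former partner $b_{j_k}$ satisfies $\deg_{G'}(b_{j_k})\ge 2$, witnessed by the two distinct edges $(w_{j_k},b_{j_k})$ and $(w_{j_{k-1}},b_{j_k})$, so $(w_{j_k},b_{j_k})$ is not an isolated edge of $G'$ and leaving $w_{j_k}$ unmatched preserves the almost perfect condition.
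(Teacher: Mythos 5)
Your proof is correct and follows essentially the same strategy as the paper's: starting from $U\setminus\{(u,u)\}$, you repair the uncovered vertex $w_u$ by swapping along an alternating path of boxes (the paper's $x_s,y_s$ are exactly your $(w_{j_s},b_{j_s})$ and $(w_{j_{s-1}},b_{j_s})$), with termination guaranteed by the forest structure, which in your formulation appears as the strictly decreasing indices forced by standard form. Your write-up is more explicit than the paper's one-line construction in checking the degenerate cases and verifying that the result really is an almost perfect matching of $D'$ meeting the first $u-1$ columns.
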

\begin{proof}
Let $x_0$ be the box $(u,u)$, and choose distinct boxes $x_i$ and $y_i$ recursively such that $y_i$ lies in the same row as $x_i$ (if such a box exists), and $x_{i+1}$ lies in the same column as $y_i$ and in $U$ (if such a box exists). Since $D$ is the diagram of a forest, this must terminate. Then simply replace the set of all $x_i$ in $U$ with the set of all $y_i$ to get $U'$.
\end{proof}

In the definition below, we will write $D'$ for the diagram obtained from $D$ by removing column $u$, and we associate to it the transversal $U'$ as in the lemma. Also, let $D''$ be the diagram obtained from $D$ by removing row and column $u$ and associate to it the transversal $U''=U \backslash \{(u,u)\}$.

\begin{defn}
A subset $Y \subset D$ is called a \emph{horizontal strip} (with respect to $U$) if either $Y=\{(u,u)\} \cup Y'$, where $Y'$ is a horizontal strip of $D'$ with respect to $U'$, or $Y=Y''$, where $Y''$ is a horizontal strip of $D''$ with respect to $U''$. (The empty set is a horizontal strip for any diagram $D$.)
\end{defn}

Here we have abused notation slightly: $D'$ may not be in standard form with respect to $U'$, but it is equivalent to some diagram $E'$ that is in standard form with respect to the image of $U'$. By a horizontal strip of $D'$ we mean the boxes of $D'$ that correspond to a horizontal strip of $E'$.

It is easy to check that the horizontal strips of size 1 exactly correspond to elements of $U$, just as for partitions, a horizontal strip of length 1 is a corner box. It is also easy to check that a horizontal strip can have at most one box in any column but can have more than one box in a row.

Note that the set of horizontal strips of $D$ can vary greatly according to the choices and orderings of almost perfect matchings. We will therefore fix once and for all a single such choice for each diagram $D$, yielding a single set $\mathcal Y(D)$ of horizontal strips.

Although the definition of a horizontal strip is a bit unusual, its value lies in the following result, the analogue of Proposition~\ref{almost-perfect}.

\begin{prop} \label{schur-ineq}
Let $D$ be the diagram of a forest and $N>1$ a positive integer. Then
\[m_D(N)=\sum_{Y \in \mathcal Y(D)} m_{D \backslash Y}(N-1).\]
\end{prop}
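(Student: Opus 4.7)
The plan is to deduce the proposition from a stronger Pieri-type character identity. By Proposition~\ref{m-g}, $m_D(N)=s_D(\underbrace{1,1,\dots,1}_{N})$, so it suffices to prove the refinement
\[s_D(x_1,\dots,x_N)=\sum_{Y\in\mathcal Y(D)}x_N^{|Y|}\,s_{D\setminus Y}(x_1,\dots,x_{N-1}),\qquad(\ast)\]
and then specialize $x_1=\cdots=x_N=1$. Identity $(\ast)$ is the forest analogue of the classical branching rule $s_\lambda(x_1,\dots,x_N)=\sum_\mu x_N^{|\lambda/\mu|}s_\mu(x_1,\dots,x_{N-1})$ summed over $\mu\subseteq\lambda$ with $\lambda/\mu$ a horizontal strip; equivalently, it is the character of a Schur-module decomposition
\[\mathscr S^D\big|_{GL(N-1)\times GL(1)}\cong\bigoplus_{Y\in\mathcal Y(D)}\mathscr S^{D\setminus Y}\otimes L^{\otimes|Y|},\]
where $L$ is the defining $1$-dimensional representation of the $GL(1)$ factor---a Schur-module analogue of Corollary~\ref{corners}.

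I would prove $(\ast)$ by induction on $|D|$. The recursive definition $\mathcal Y(D)=\{\{(u,u)\}\cup Y':Y'\in\mathcal Y(D')\}\sqcup\mathcal Y(D'')$ splits the right-hand side into two groups, which the inductive hypotheses applied to the strictly smaller $D'$ and $D''$ handle separately. To relate $\mathscr S^D$ itself to $\mathscr S^{D'}$ and $\mathscr S^{D''}$, I would adapt the filtration argument of Proposition~\ref{restrict}: rather than tracking only the top label $n$, one filters the spanning tableaux of $\mathscr S^D=V^{\otimes n}C(D)R(D)$ by the positions of \emph{all} entries equal to $N$, using the special transversal $U$ to force these positions onto the diagonal part of $U$. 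The successive subquotients then identify with $\mathscr S^{D\setminus Y}\otimes L^{\otimes|Y|}$ exactly along the horizontal-strip recursion.

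The main obstacle is the bookkeeping for the column-$u$ (and dually the row-$u$) stub. When $Y=\{(u,u)\}\cup Y'$ contains $(u,u)$, the diagram $D\setminus Y$ still retains the column-$u$ boxes $\{(i,u):i>u\}$, so $\mathscr S^{D\setminus Y}$ does not directly factor through $\mathscr S^{D'\setminus Y'}$. Bridging this gap is exactly the role of the augmenting-path lemma supplying the transversal $U'\subset D'$: it ensures the horizontal-strip recursion is compatible with the Schur-module decomposition. Once this bookkeeping is carried out, specializing $(\ast)$ at $x_1=\cdots=x_N=1$ yields the proposition.
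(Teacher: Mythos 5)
There is a genuine gap, and it is structural rather than a matter of bookkeeping. Your plan is to deduce the proposition from the character identity $(\ast)$, which you in turn want to establish by a filtration of $\mathscr S^D$ modeled on Proposition~\ref{restrict}. But a filtration argument of that type only ever produces one inequality: it exhibits a chain $0\subset V_m\subset\cdots\subset V_1\subset\mathscr S^D$ whose successive quotients surject onto the modules $\mathscr S^{D\setminus Y}$, and hence shows that the character of $\mathscr S^D$ dominates $\sum_{Y}x_N^{|Y|}s_{D\setminus Y}(x_1,\dots,x_{N-1})$ coefficientwise. To upgrade this to equality you must also show that the tableaux whose entries equal to $N$ form a horizontal strip actually generate all of $\mathscr S^D$ (equivalently, a straightening law reducing an arbitrary $TC(D)R(D)$ to those generators), and nothing in your outline supplies this. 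Note that this is exactly how the paper itself is organized: Proposition~\ref{restrict} and the filtration in Theorem~\ref{horizontal-strips} give only lower bounds, and the matching upper bounds come from independent combinatorial counts (Proposition~\ref{almost-perfect} for Specht modules, and the present proposition for Schur modules). In particular, the paper proves Proposition~\ref{schur-ineq} \emph{first} and then uses it to close the filtration argument in Theorem~\ref{horizontal-strips}; your proposal inverts that dependency, so as written it either presupposes the hard half of Theorem~\ref{horizontal-strips} or leaves the reverse inequality unproved. Your fallback of running the induction purely at the level of Schur functions does not close up either, because $D\setminus(\{(u,u)\}\cup Y')$ is not $D'\setminus Y'$ and $D\setminus Y''$ is not $D''\setminus Y''$ (the whole of column $u$, respectively row and column $u$, is deleted in forming $D'$ and $D''$), and there is no identity relating $s_D$ to $s_{D'}$ and $s_{D''}$ available at that stage.

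The missing ingredient is an elementary one: the paper proves the proposition by a direct bijection between the lattice-point sets $W_D(N)$ of Proposition~\ref{m-g} and the disjoint union $\bigcup_{Y\in\mathcal Y(D)}W_{D\setminus Y}(N-1)$, built by the same recursion on $(u,u)$, $D'$, $D''$ that defines horizontal strips (insert a $0$ at $(u,u)$ and recurse on $D'$, or add $1$ to the label of $(u,u)$ and recurse on $D''$), with the row- and column-sum constraints checked at each step. No representation theory is needed. If you want to keep your module-theoretic framing, you still need this bijection (or an equivalent straightening/spanning argument) to get the inequality in the other direction; the column-$u$ stub issue you flag is real but secondary to this.
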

\begin{proof}
Let $W_D(N)$ be the set of weightings of $D$ as enumerated by $m_D(N)$ in Proposition~\ref{m-g}. In other words, $W_D(N)$ is the number of labelings of the boxes of $D$ with nonnegative integers such that the sum of each row is at most $N-1$ and the sum of each column is at most $N$ minus the number of boxes in that column.

Note that the conditions on any row or column after the first $u$ are superfluous: since the diagonal corresponds to an almost perfect matching, any row after the first $u$ contains at most one box, and any such box lies in one of the first $u$ columns. Then the condition from the row states that this box is labeled at most $N-1$, which is automatically implied by the condition on its column. (The same argument holds for columns after the first $u$.)

We will construct a bijection $f\colon \bigcup_{Y \in \mathcal Y(D)} W_{D \backslash Y}(N-1)\to W_D(N)$ inductively on the number of boxes of $D$. This bijection will satisfy the following properties: for $w \in W_{D\backslash Y}(N-1)$,
\begin{itemize}
\item $f(w)$ is obtained from $w$ by inserting boxes labeled 0 at the positions of $Y$ and increasing the labels of certain other boxes of $D \backslash Y$ by 1;
\item each of the first $u$ columns will have either a box inserted or a box whose label is increased (not both);
\item no row will contain two boxes with labels that are increased.
\end{itemize}
In particular, this bijection will increase the sum of any row or column by either 0 or 1, and, more specifically, the sum of any of the first $u$ columns will either increase by 1, or else the sum will stay the same but the column will gain a box. Note that this will imply that the image of $f$ is contained in $W_D(N)$.

For the forward direction, let $w \in W_{D\backslash Y}(N-1)$. If $(u,u) \in Y$, then let $z$ be the labeling of $D$ obtained from $Y$ such that $z(u,u)=0$, $z|_{D'}=f(w|_{D'\backslash Y})$ by induction, and $z$ and $w$ agree elsewhere. If $(u,u) \not\in Y$, then let $z$ be such that $z(u,u)=w(u,u)+1$, $z|_{D''}=f(w|_{D'' \backslash Y})$, and $z$ and $w$ agree elsewhere. It is easy to check by induction that all the conditions above are satisfied.

It remains to verify that $f$ is a bijection. Choose $z \in W_D(N)$, and suppose $z(u,u)=0$. Then if $f(w)=z$, then $w$ must be unique. Indeed, first note that we must have $(u,u) \in Y$. Since $z|_{D'} \in W_{D'}(N)$, by induction we can find $f^{-1}(z|_{D'}) \in W_{D' \backslash Y'}(N-1)$ for some horizontal strip $Y'$ of $D'$. Then we must have $Y=\{(u,u)\} \cup Y'$, $w|_{D'\backslash Y'}=f^{-1}(z|_{D'})$, and $w$ and $z$ agree elsewhere on $D \backslash Y$, so $w$ is uniquely determined. We then need to verify that such a $w$ exists, that is, that it lies in $W_{D \backslash Y}(N-1)$. Since $w|_{D'\backslash Y'} \in w_{D' \backslash Y'}(N-1)$, the sum of any of the first $u$ rows is at most $N-2$. Similarly, the sum of any of the first $u-1$ columns is at most $N-1$ minus the length of the column. This also holds for column $u$, for the sum of this column in $z$ was at most $N$ minus the length, and while the sum in $w$ is the same, we removed a box. Thus $w=f^{-1}(z)$ is well-defined and unique in this case.

If $z(u,u) \neq 0$, then we similarly find that we must have $(u,u) \notin Y$ and $f^{-1}(z|_{D''}) \in W_{D''\backslash Y''}(N-1)$ for $Y''=Y$, so $w$ is determined by $w|_{D''\backslash Y''}=f^{-1}(z|_{D''})$, $w(u,u)=z(u,u)-1$, and $w$ and $z$ agree elsewhere. We again need to check that $w \in W_{D \backslash Y}(N-1)$. Since $w|_{D''\backslash Y''} \in W_{D''\backslash Y''}(N-1)$, the sum of any of the first $u$ rows is at most $N-2$ (even row $u$, for it was at most $N-1$ and we subtracted 1 from $(u,u)$). Similarly, the sum of any of the first $u$ columns is at most $N-1$ minus the length of the column, for in each of these columns we either decreased a label by 1 or deleted a box (by the properties above). Thus $w=f^{-1}(z)$ is well-defined and unique in this case as well. The result now follows from Proposition~\ref{m-g}.
\end{proof}

We may now use this result to prove the following analogue of Corollary~\ref{corners}.

\begin{thm} \label{horizontal-strips}
Let $D$ be the diagram of a forest and $\mathscr S^D$ the corresponding $GL(N)$-module. Then
\[\mathscr S^D|_{GL(N-1)} \cong \bigoplus_{Y \in \mathcal Y(D)} \mathscr S^{D \backslash Y}\]
as $GL(N-1)$-modules.
\end{thm}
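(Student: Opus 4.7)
The strategy is character comparison. Both $\mathscr S^D|_{GL(N-1)}$ and $\bigoplus_{Y\in\mathcal Y(D)}\mathscr S^{D\backslash Y}$ are polynomial representations of $GL(N-1)$, hence completely reducible into irreducibles $\mathscr S^\mu$, so the theorem holds iff the two sides have equal characters. The character of $\mathscr S^D|_{GL(N-1)}$ at $\operatorname{diag}(x_1,\dots,x_{N-1})$ is the $GL(N)$-character of $\mathscr S^D$ evaluated at $\operatorname{diag}(x_1,\dots,x_{N-1},1)$, namely $s_D(x_1,\dots,x_{N-1},1)$. The theorem therefore reduces to the symmetric-function identity
\[
s_D(x_1,\dots,x_{N-1},1)=\sum_{Y\in\mathcal Y(D)} s_{D\backslash Y}(x_1,\dots,x_{N-1}) \qquad \text{for every } N,
\]
which is equivalent to $\varphi(s_D)=\sum_Y s_{D\backslash Y}$ in $\Lambda_\ZZ$, where $\varphi$ is the ring endomorphism $f\mapsto f(1,x_1,x_2,\dots)$; on generators $\varphi(h_n)=1+h_1+\cdots+h_n$.

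To prove this $\Lambda_\ZZ$-identity I would invoke Theorem~\ref{general-leaf}. Because $\varphi$ is a ring homomorphism and $s$ is multiplicative and leaf-recurring (Proposition~\ref{s-unique}), the map $G\mapsto\varphi(s_G)$ inherits both properties and takes value $\varphi(h_n)$ on each $T_n$; Theorem~\ref{general-leaf} makes it the \emph{unique} such function $\mathcal F\to\Lambda_\ZZ$. It therefore suffices to check that $r\colon G\mapsto\sum_{Y\in\mathcal Y(G)} s_{G\backslash Y}$ also satisfies these three properties. The base case is direct: for the natural almost perfect matching of $T_n$, unfolding the recursive definition gives $\mathcal Y(T_n)=\{\{(1,1),\dots,(1,k)\}:0\le k\le n\}$, with each complement a smaller star $T_{n-k}$, so $r(T_n)=\sum_{k=0}^n h_k=\varphi(h_n)$. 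Multiplicativity follows from choosing the almost perfect matching of $G_1+G_2$ to extend those of $G_1$ and $G_2$: the recursive definition of $\mathcal Y$ then factors $\mathcal Y(G_1+G_2)$ as $\mathcal Y(G_1)\times\mathcal Y(G_2)$, and $s_{(G_1+G_2)\backslash Y}=s_{G_1\backslash Y_1}\cdot s_{G_2\backslash Y_2}$.

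The main obstacle is verifying the leaf recurrence $r(G)=r(G_1)+r(G_2)$ for $G,G_1,G_2$ related as in Proposition~\ref{leaf}. I would induct on $|G|$ and construct a weight-preserving bijection $\mathcal Y(G)\leftrightarrow\mathcal Y(G_1)\sqcup\mathcal Y(G_2)$ matching $s_{G\backslash Y}$ with the corresponding $s_{G_i\backslash Y_i}$. The key is to choose almost perfect matchings of $G,G_1,G_2$ so that the pendant edges $\overline{v_1v_1'},\overline{v_2v_2'}$ and the inserted edge $\overline{v_1v_2}$ sit compatibly within the recursive unfolding of $\mathcal Y$; a case split on whether the last matched diagonal box coincides with one of these pendants then reduces the identity on $G$ to leaf-type identities on strictly smaller forests, where the inductive hypothesis applies. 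This is essentially a Schur-function-valued lift of the matching-polytope argument in the proof of Proposition~\ref{leaf}. Once all three properties of $r$ are confirmed, Theorem~\ref{general-leaf} forces $r=\varphi\circ s$, yielding the character identity and therefore the desired $GL(N-1)$-module isomorphism.
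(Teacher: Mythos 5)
Your reduction to the character identity $s_D(x_1,\dots,x_{N-1},1)=\sum_{Y\in\mathcal Y(D)}s_{D\backslash Y}(x_1,\dots,x_{N-1})$ is valid (polynomial $GL(N-1)$-modules are determined by their characters), and deducing it from Theorem~\ref{general-leaf} is an attractive idea, but the argument has a genuine gap at its central step: you never actually verify that $r(G)=\sum_{Y\in\mathcal Y(G)}s_{G\backslash Y}$ satisfies the leaf recurrence, and this is not a routine verification. The proposed ``weight-preserving bijection $\mathcal Y(G)\leftrightarrow\mathcal Y(G_1)\sqcup\mathcal Y(G_2)$ matching $s_{G\backslash Y}$ with the corresponding $s_{G_i\backslash Y_i}$'' cannot work as stated: $G\backslash Y$ and $G_i\backslash Y_i$ are genuinely different forests whose Schur functions will not agree term by term. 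What one would actually need is to organize the strips so that the triples $(G\backslash Y,\,G_1\backslash Y_1,\,G_2\backslash Y_2)$ are themselves related by the leaf recurrence, and whether this can be arranged depends delicately on how the edges $\overline{v_1v_1'}$, $\overline{v_2v_2'}$, $\overline{v_1v_2}$ sit relative to the three a priori unrelated almost perfect matchings (and orderings) defining $\mathcal Y(G)$, $\mathcal Y(G_1)$, $\mathcal Y(G_2)$. Relatedly, $\mathcal Y(D)$ is defined with respect to a fixed, arbitrary choice for each $D$; your multiplicativity step silently re-chooses the matching of $G_1+G_2$ to be compatible with those of $G_1$ and $G_2$. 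To do that legitimately you would first have to show that $\sum_{Y\in\mathcal Y(D)}s_{D\backslash Y}$ is independent of the choice, which is essentially equivalent to the theorem itself; Proposition~\ref{schur-ineq} only controls the principal specializations $\sum_Y s_{D\backslash Y}(1^{N-1})$, which do not determine the symmetric function.

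The paper sidesteps both difficulties by never proving the full character identity combinatorially: it establishes only the dimension identity (Proposition~\ref{schur-ineq}, an explicit bijection on integer weightings valid for any admissible $\mathcal Y(D)$) and then exhibits $\bigoplus_Y\mathscr S^{D\backslash Y}$ directly as a submodule of $\mathscr S^D|_{GL(N-1)}$ via a filtration built from the projections $\varphi_Y$ ordered by $\prec$; containment plus equality of dimensions plus complete reducibility then forces the isomorphism. If you want to keep the character-theoretic route, you must supply the leaf-recurrence argument for $r$ in full detail, and I expect that to be at least as hard as the paper's filtration argument.
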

\begin{proof}
By Proposition~\ref{schur-ineq}, we have that the dimension of both sides of this equation are equal. It therefore suffices to show that the left side contains the right side as a submodule.

For a horizontal strip $Y$, let $T_Y$ be the set of all tableaux $T$ with boxes in $Y$ labeled $N$ and all other boxes labeled at most $N-1$. Define the map $\varphi_Y$ by $\varphi_Y(T)=T$ if $T \in T_Y$ and 0 otherwise. We may identify $\varphi_Y$ with a linear map on $V^{\otimes n}$. Since $GL(N-1)$ stabilizes $e_1, \dots, e_{N-1}$ and fixes $e_N$, $\varphi_Y$ is a $GL(N-1)$-homomorphism.

We construct a linear order $\prec$ on $\mathcal Y(D)$ inductively on the number of boxes of $D$ as follows:
\begin{enumerate}
\item If $(u,u) \in Y$ and $(u,u) \notin Z$, then $Z \prec Y$;
\item If $(u,u) \notin Y, Z$, then $Y''=Y, Z''=Z \in \mathcal Y(D'')$, so write $Z \prec Y$ if and only if $Z'' \prec Y''$;
\item If $(u,u) \in Y,Z$, then $Y'=Y \backslash \{(u,u)\}, Z'=Z \backslash \{(u,u)\} \in \mathcal Y(D')$, so write $Z \prec Y$ if and only if $Z' \prec Y'$.
\end{enumerate}
Let $T \in T_Y$. We claim that $\prec$ has the property that if $Z \prec Y$, then $TC(D)R(D)$ does not contain a term in $T_Z$. In other words, if $Z \prec Y$, then $Tqp \notin T_Z$ for $q \in C_D$, $p \in R_D$. To see this, we check that it holds for the three defining conditions above.

For (1), if $(u,u) \in Y$, then $Tqp$ contains a box labeled $N$ in either row $u$ or column $u$. Hence by the definition of horizontal strips, it cannot lie in $T_Z$ if $(u,u) \notin Z$.

For (2), note that if $(u,u) \notin Y,Z$, then $Y$ and $Z$ do not contain any boxes in row $u$ or column $u$. Thus we may assume that $q$ and $p$ do not affect row $u$ and column $u$. If $Tqp \in T_Z$, then some term of $T|_{D''}\bar{q}\bar{p}$ lies in $T_{Z''}$, where $\bar q$ and $\bar p$ correspond to $q$ and $p$ in $C_{D''}$ and $R_{D''}$. Then we cannot have $Z'' \prec Y''$ and hence neither can we have $Z \prec Y$.

For (3), suppose $Tqp \in T_Z$. Note that $Y$ and $Z$ contain only one box in column $u$, namely $(u,u)$. Then $q$ must send box $(u,u)$ to itself, and $p$ sends $(u,u)$ to another box $(u,v)$. Let $\sigma$ be the transposition in $R_D$ switching $(u,u)$ and $(u,v)$. Then by replacing $p$ by $p\sigma$, we may assume that neither $q$ nor $p$ affects column $u$. Then restricting to $D'$ as in the case above shows this case as well.

It follows that if $T \in T_Y$ and $Z \prec Y$, then $\varphi_Z(TC(D)R(D)) = 0$. Note that the analysis above also shows that if $Tqp \in T_Y$, then $q$ acts as the identity on $Y$, so that we may associate $q$ to an element of $C_{D \backslash Y}$. From this, it follows that $\varphi_Y(TC(D)R(D))$ is a scalar multiple of $TC(D \backslash Y)R(D \backslash Y)$ (by the order of the subgroup of $R_D$ that stabilizes $Y$). 

With this, we are ready to prove the result. Index the horizontal strips such that $Y_1 \prec Y_2 \prec \dots \prec Y_m$, and let $V_i$ be the subrepresentation of $\mathscr S^D|_{GL(N-1)}$ generated by $(T_{Y_i} \cup T_{Y_{i+1}} \cup \dots \cup T_{Y_m})C(D)R(D)$. By the discussion above, $V_{i+1} \subset \ker \varphi_{Y_i}$, while $\varphi_{Y_i}(V_i)$ is generated by $\varphi_{Y_i}(TC(D)R(D))$ for $T \in T_{Y_i}$ and hence is naturally isomorphic to $\mathscr S^{D \backslash Y_i}$. Therefore the successive quotients of $0 \subset V_m \subset V_{m-1} \subset \dots \subset V_1$ contain $\mathscr S^{D \backslash {Y_m}}$, $\mathscr S^{D \backslash {Y_{m-1}}}$, \dots, $\mathscr S^{D \backslash {Y_1}}$ in succession. It follows from the complete reducibility of $GL(N-1)$-representations that we have the desired inclusion, and the result follows.
\end{proof} 

In fact, this shows that if for each $Y \in \mathcal Y(D)$, there exists a set of tableaux $SS(D \backslash Y, N-1)$ such that $SS(D \backslash Y, N-1)C(D \backslash Y)R(D \backslash Y)$ forms a basis of the $GL(N-1)$-module $\mathscr S^{D \backslash Y}$, then we can construct a basis of the $GL(N)$-module $\mathscr S^D$ simply by adding boxes labeled $N$ to the tableaux in $SS(D \backslash Y, N-1)$ along $Y$ for all $Y$. This leads us to the following definition.

\begin{defn}
Let $D$ be the diagram of a forest, and let $T$ be a tableau of shape $D$ with entries at most $N$. We say that $T$ is \emph{semistandard} if the boxes labeled by $N$ in $T$ form a horizontal strip in $\mathcal Y(D)$, and the tableau formed from $T$ by removing all boxes labeled $N$ is also semistandard. (The empty tableau is semistandard by default.) We say that $T$ is \emph{standard} if it is semistandard and contains only the entries $1, \dots, n$, each exactly once. 
\end{defn}

(Note that since horizontal strips of length 1 on $D$ precisely correspond to edges in an almost perfect matching of the graph of $G$, this definition agrees with the definition of standard labelings given at the end of Section 2.)

It is easy to see that a semistandard tableau is equivalent to a sequence of diagrams $\emptyset = D^{(0)} \subset D^{(1)} \subset \dots \subset D^{(N)}=D$ such that $D^{(i)} \backslash D^{(i-1)} \in \mathcal Y(D^{(i)})$ for $1 \leq i \leq N$. We denote the set of semistandard tableaux of shape $D$ with entries at most $N$ by $SS(D,N)$ and the set of all semistandard tableaux of shape $D$ by $SS(D)$.

\begin{prop}
Let $D$ be the diagram of a forest. Then
\[s_D(x_1, x_2, \dots)=\sum_{T \in SS(D)} x_1^{\alpha_1}x_2^{\alpha_2}\cdots,\]
where  $\alpha_i$ is the number of occurrences of $i$ in $T$. In particular, $\dim(S^D)=V(D)$ is the number of standard tableaux of shape $D$.
\end{prop}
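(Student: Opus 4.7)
The plan is to prove by induction on $N$ the finite-variable identity
\[s_D(x_1, \ldots, x_N) = \sum_{T \in SS(D, N)} x^T,\]
from which the stated identity follows by letting $N \to \infty$. I will deduce the ``in particular'' statement separately from earlier results.

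The first step is a torus-equivariant refinement of Theorem~\ref{horizontal-strips}. The proof of that theorem constructs a filtration of $\mathscr S^D$ whose successive quotients are isomorphic as $GL(N-1)$-modules to the $\mathscr S^{D \backslash Y}$. But the set $T_Y$ used there consists precisely of tableaux with exactly $|Y|$ entries equal to $N$, hence is a single weight space for the diagonal scalar $\operatorname{diag}(1, \ldots, 1, x_N)$; moreover, both the projector $\varphi_Y$ and the $GL(N-1)$-action commute with this scalar. Consequently the successive quotient indexed by $Y$ carries an additional factor of $x_N^{|Y|}$, and taking characters (using $\ch(\mathscr S^D) = s_D(x_1, \ldots, x_N)$) yields
\[s_D(x_1, \ldots, x_N) = \sum_{Y \in \mathcal Y(D)} x_N^{|Y|}\, s_{D \backslash Y}(x_1, \ldots, x_{N-1}).\]

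The induction is then routine. The base case $N = 0$ forces $D = \emptyset$ and reduces to $1 = 1$. For the inductive step, apply the hypothesis to each $s_{D \backslash Y}(x_1, \ldots, x_{N-1})$ on the right-hand side and observe that, by the recursive definition of semistandard tableau, every $T \in SS(D, N)$ is uniquely specified by a horizontal strip $Y \in \mathcal Y(D)$ (the locations of the $N$'s) together with an element $T' \in SS(D \backslash Y, N-1)$, with $x^T = x_N^{|Y|} x^{T'}$. Matching terms completes the induction.

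For the concluding sentence, the parenthetical remark following the definition of standard tableau identifies standard tableaux of shape $D$ with standard labelings of $D$, and the count of the latter is $V(D) = \dim S^D$ by Proposition~\ref{standard} and Theorem~\ref{main-theorem}. The main obstacle is the first step: one must reopen the proof of Theorem~\ref{horizontal-strips} to verify that the filtration there is actually compatible with the $\operatorname{diag}(1,\ldots,1,x_N)$-action, rather than simply citing the theorem as a black box. Once that compatibility is in hand, the rest of the argument is pure bookkeeping.
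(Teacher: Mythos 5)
Your proof is correct, and it rests on the same two pillars as the paper's: the filtration constructed in the proof of Theorem~\ref{horizontal-strips}, and the fact that a semistandard tableau in $SS(D,N)$ is the same data as a horizontal strip $Y\in\mathcal Y(D)$ together with an element of $SS(D\backslash Y,N-1)$. The difference is one of packaging. The paper takes from the discussion following Theorem~\ref{horizontal-strips} the explicit basis $\{TC(D)R(D): T\in SS(D,N)\}$ of $\mathscr S^D$ and simply observes that each basis vector is a weight vector of weight $x^T$ for the diagonal torus, so the character is read off in one line. You instead stay at the level of characters: you upgrade the filtration $0\subset V_m\subset\cdots\subset V_1$ to one stable under $\operatorname{diag}(1,\dots,1,x_N)$ and extract the branching identity $s_D(x_1,\dots,x_N)=\sum_Y x_N^{|Y|}s_{D\backslash Y}(x_1,\dots,x_{N-1})$, then induct on $N$. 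Your equivariance check is sound: each generator $TC(D)R(D)$ with $T\in T_Y$ has exactly $|Y|$ entries equal to $N$ in every term, hence is a weight vector for the last torus coordinate, and this weight is preserved by the commuting $GL(N-1)$-action that generates $V_i$; so each quotient $V_i/V_{i+1}\cong\mathscr S^{D\backslash Y_i}$ carries the extra scalar $x_N^{|Y_i|}$ and traces add along the filtration. What your version buys is that you never need to certify a linearly independent spanning set; what it costs is that the induction the paper performs once (to build the basis) is repeated here for the character. Two trivial points: your base case should read that for $N=0$ and $D\neq\emptyset$ both sides vanish (rather than forcing $D=\emptyset$), and passing to infinitely many variables uses that a homogeneous symmetric function of degree $n$ is determined by its specializations to $N\geq n$ variables. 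The deduction of the final sentence via Proposition~\ref{standard}, Theorem~\ref{main-theorem}, and the identification of standard tableaux with standard labelings is exactly as intended.
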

\begin{proof}
From the discussion after the proof of Theorem~\ref{horizontal-strips}, we find that $TC(D)R(D)$, for $T \in SS(D,N)$, forms a basis for $\mathscr S^D$. Thus, since $s_D(x_1, \dots, x_N)$ is the character of $\mathscr S^D$, and $\operatorname{diag}(x_1, \dots, x_N)$ acts on $TC(D)R(D)$ by multiplication by $x_1^{\alpha_1}x_2^{\alpha_2}\cdots$, the result follows easily.
\end{proof}

This therefore provides a combinatorial description of the coefficients of $s_D$. 

\section{Conclusion}

In this paper, we have shown that there is a surprising relationship between matchings of a forest and the representation theory of the symmetric and general linear groups. In particular, we have shown how properties of the Specht and Schur modules of a forest can arise from considering its matchings and its matching polytope.

However, there is still one piece of this picture that remains. Although we have given a method of determining combinatorially the Schur function $s_G$ for $G \in \mathcal F$, we have not yet explicitly given a combinatorial rule for the coefficients $c_\lambda^G$ akin to the Littlewood-Richardson rule. In other words, we would like to identify $c_\lambda^G$ as enumerating a certain type of combinatorial object. Since diagrams corresponding to forests are not in general ``percentage-avoiding,'' this would give a new class of diagrams for which we can determine $c_\lambda^D$ that is not already covered by existing results.

Even though the proofs above are fairly short, in some sense they remain a bit mysterious. In particular, it seems almost coincidental that both $\dim S^G$ and $V(G)$ happen to satisfy the leaf recurrence, which serves as the basis for all the results above. First, we can ask whether there are any other interesting functions that satisfy the leaf recurrence, and if so, what their role in this story is. We can also ask whether there is a more natural explanation for the phenomena presented here, and in particular whether it is possible to formulate similar results that are true for more general classes of diagrams.

If a more general picture does in fact exist, it will necessarily be more complicated than in the case of forests. For instance, it is not true in general that the normalized volume of the matching polytope of a bipartite graph is equal to the dimension of its Specht module. (As an example, if $G$ is a cycle of length 4, then $\dim S^G=2$ but $V(G)=4$.) One might then ask whether there exists a nice combinatorial interpretation for the volume of the matching polytope of a general bipartite graph, or indeed of a general graph.

It is also not true that for a general diagram $D$ one can define ``corner boxes'' and ``horizontal strips'' that satisfy the properties of Corollary~\ref{corners} and Theorem~\ref{horizontal-strips}. (The smallest counterexample corresponds to the cycle of length 6.) However, this is possible for many diagrams, including all northwest shapes \cite{ReinerShimozono}. It would therefore be interesting to classify those diagrams for which one can make such definitions.

\section{Acknowledgments}
I would like to thank Alexander Postnikov and Richard Stanley for their input in the development of this paper. This material is based upon work supported under a National Science Foundation Graduate Research Fellowship. 

\bibliography{matching3}
\bibliographystyle{plain}

\end{document}